	\titlespacing{\section}{0pt}{12pt}{0pt}
	\titlespacing{\subsection}{0pt}{6pt}{0pt}
\theoremstyle{plain}
\newtheorem{theorem}{Theorem}
\newtheorem{proposition}[theorem]{Proposition}
\newtheorem{lemma}[theorem]{Lemma}
\newtheorem{conjecture}[theorem]{Conjecture}
\newtheorem*{conjecture*}{Conjecture}
\theoremstyle{definition}
\newtheorem{definition}[theorem]{Definition}
\theoremstyle{remark}
\newtheorem{remark}[theorem]{Remark}
\definecolor{linkred}{rgb}{0.75,0,0}
\definecolor{linkblue}{rgb}{0,0,1}
\newcommand\blfootnote[1]{
	\begingroup
	\renewcommand\thefootnote{}\footnote{#1}
	\addtocounter{footnote}{-1}
	\endgroup
}
\setlist{nolistsep}
\newcommand{\dd}{\mathrm{d}}
\newcommand{\h}{\hbar}
\newcommand{\xx}{\widehat{x}}
\newcommand{\yy}{\widehat{y}}
\newcommand{\z}{\overline{z}}
\newcommand{\zz}{\mathbf{z}}
\renewcommand{\H}{\vec{H}}
\newcommand{\mmu}{\boldsymbol{\mu}}
\newcommand{\vbar}{\,\vert\,}
\DeclareRobustCommand{\stirling}{\genfrac\{\}{0pt}{}}
\newcommand{\dashrule}[1][black]{
	\color{#1}\rule[\dimexpr.5ex-.2pt]{4pt}{.4pt}\xleaders\hbox{\rule{4pt}{0pt}\rule[\dimexpr.5ex-.2pt]{4pt}{.4pt}}\hfill\kern0pt
}
\newcommand{\rulecolor}[1]{
	\def\CT@arc@{\color{#1}}
}
\begin{document}

{\large \bfseries Monotone orbifold Hurwitz numbers}

{\bfseries Norman Do and Maksim Karev}

\emph{Abstract.} In general, Hurwitz numbers count branched covers of the Riemann sphere with prescribed ramification data, or equivalently, factorisations in the symmetric group with prescribed cycle structure data. In this paper, we initiate the study of monotone orbifold Hurwitz numbers. These are simultaneously variations of the orbifold case and generalisations of the monotone case, both of which have been previously studied in the literature. We derive a cut-and-join recursion for monotone orbifold Hurwitz numbers, determine a quantum curve governing their wave function, and state an explicit conjecture relating them to topological recursion.

\blfootnote{\emph{2010 Mathematics Subject Classification:} 05A15, 14N10, 14H30, 81S10 \\
\emph{Date:} 20 May 2015 \\ The first author was partly supported by the Australian Research Council grant DE130100650. The second author was partly supported by the Russian Foundation for Basic Research grant 13-01-00383a.}

\hrule

\setlength{\parskip}{0pt}
\tableofcontents
\setlength{\parskip}{6pt}

\section{Introduction} \label{sec:intro}

In general, Hurwitz numbers count branched covers of the Riemann sphere with prescribed genus and ramification data. For example, the simple Hurwitz number $H_{g,n}(\mu_1, \ldots, \mu_n)$ is the weighted count of genus $g$ connected branched covers $(\Sigma; p_1, \ldots, p_n) \to (\mathbb{CP}^1; \infty)$ with $m$ fixed points with simple ramification and branching order $\mu_i$ at the preimage $p_i$ of $\infty$. As usual, the weight of a branched cover is the reciprocal of its number of automorphisms.

The Riemann existence theorem asserts that there is a unique branched cover with prescribed monodromy at the ramification points. It follows that the simple Hurwitz number $H_{g,n}(\mu_1, \ldots, \mu_n)$ is equal to $\frac{1}{|\mmu|!}$ multiplied by the number of tuples $(\sigma_1, \ldots, \sigma_m)$ of transpositions in $S_{|\mmu|}$ such that
\begin{itemize}
\item $m = 2g - 2 + n + |\mmu|$;
\item the cycles of $\sigma_1 \cdots \sigma_m$ are labelled $1, 2, \ldots, n$ such that cycle $i$ has length $\mu_i$; and
\item $\sigma_1, \ldots, \sigma_m$ generate a transitive subgroup of $S_{|\mmu|}$.
\end{itemize}
Here and throughout the paper, we use the notation $|\mmu|$ to denote the sum $\mu_1 + \cdots + \mu_n$. The first condition above is implied by the Riemann--Hurwitz theorem, the second is a consequence of the ramification profile over $\infty$, and the third guarantees that the resulting branched cover is connected.

Since Hurwitz first introduced them~\cite{hur}, simple Hurwitz numbers have been well-studied in the literature. In particular, the last two decades have seen a remarkable revival of Hurwitz theory, inspired by connections to enumerative geometry and mathematical physics. For example, it was observed that the Hurwitz numbers exhibit polynomiality~\cite{gou-jac}, which was later derived as a consequence of the celebrated ELSV theorem. This result not only proves that simple Hurwitz numbers are inherently polynomial, but that the coefficients of the polynomials are intersection numbers on the Deligne--Mumford moduli spaces of curves~\cite{eke-lan-sha-vai}. More recently, the simple Hurwitz numbers have been shown to obey the topological recursion of Chekhov, Eynard and Orantin~\cite{eyn-mul-saf}, as well as a quantum curve equation~\cite{mul-sha-spi}.

There are several variations of the simple Hurwitz numbers. For example, one can require the ramification profile over $0 \in \mathbb{CP}^1$ to be of type $(a, a, \ldots, a)$ for a fixed positive integer $a$. In the monodromy viewpoint, one must include another permutation $\sigma_0$ with cycle type $(a, a, \ldots, a)$ in the factorisation. The resulting enumeration produces so-called \emph{$a$-orbifold Hurwitz numbers} and one recovers the usual simple Hurwitz numbers in the case $a = 1$. Various results concerning simple Hurwitz numbers have analogues in the case of orbifold Hurwitz numbers, such as polynomiality~\cite{joh-pan-tse, dun-lew-pop-sha}, the topological recursion~\cite{do-lei-nor, bou-her-liu-mul}, and a quantum curve equation~\cite{mul-sha-spi}.

Another variation arises by requiring the sequence of transpositions $\sigma_1, \ldots, \sigma_m$ to be \emph{monotone}. In other words, if we write $\sigma_i = (r_i ~ s_i)$ with $r_i < s_i$ for $i = 1, 2, \ldots, m$, then we have $s_1 \leq s_2 \leq \cdots \leq s_m$. The resulting enumeration produces so-called \emph{monotone Hurwitz numbers}, which were introduced by Goulden, Guay-Paquet and Novak in their work on the asymptotic expansion of the HCIZ matrix integral~\cite{gou-gua-nov14}. Again, various results concerning simple Hurwitz numbers have analogues in the case of monotone Hurwitz numbers, such as polynomiality~\cite{gou-gua-nov13b}, the topological recursion~\cite{do-dye-mat}, and a quantum curve equation~\cite{do-dye-mat}.

In this paper, we introduce the notion of a monotone orbifold Hurwitz number, which is a hybrid of the aforementioned variations and defined as follows.

\begin{definition}
The {\em monotone orbifold Hurwitz number $\H_{g,n}^{[a]}(\mu_1, \ldots, \mu_n)$} is equal to $\frac{1}{|\mmu|!}$ multiplied by the number of tuples $(\sigma_0, \sigma_1, \ldots, \sigma_m)$ of permutations in $S_{|\mmu|}$ such that
\begin{itemize}
\item $m = 2g - 2 + n + \frac{|\mmu|}{a}$;
\item $\sigma_0$ has cycle type $(a, a, \ldots, a)$ and $\sigma_1, \ldots, \sigma_m$ is a monotone sequence of transpositions;
\item the cycles of $\sigma_0 \sigma_1 \cdots \sigma_m$ are labelled $1, 2, \ldots, n$ such that cycle $i$ has length $\mu_i$; and
\item $\sigma_0, \sigma_1, \ldots, \sigma_m$ generate a transitive subgroup of $S_{|\mmu|}$.
\end{itemize}
\end{definition}

It is natural to ask which results for simple Hurwitz numbers have analogues in the case of monotone orbifold Hurwitz numbers. We begin to answer this question by deriving a cut-and-join recursion for monotone orbifold Hurwitz numbers. The cut-and-join analysis that we use to derive the recursion requires a refined enumeration $\H_{g,n}^{[a],\ell}(\mu_1 \vbar \mu_2, \ldots, \mu_n)$ for $\ell = 1, 2, \ldots, a$, which satisfies
\begin{equation} \label{eq:hurwitzsum1}
\H_{g,n}^{[a]}(\mu_1, \ldots, \mu_n) = \sum_{i=1}^n \sum_{\ell=1}^a \H_{g,n}^{[a],\ell}(\mu_i \vbar \mmu_{S \setminus \{i\}}).
\end{equation}
We use the vertical bar in the notation $\H_{g,n}^{[a],\ell}(\mu_1 \vbar \mu_2, \ldots, \mu_n)$ to emphasise that $\mu_1$ is to be treated as special among the arguments $\mu_1, \mu_2, \ldots, \mu_n$. The definition for this refined enumeration and the proof of the following theorem will be presented in detail in Section~\ref{sec:cutjoin}.

\begin{theorem}[Cut-and-join recursion] \label{thm:cutjoin}
The monotone orbifold Hurwitz numbers are completely determined by equation~\eqref{eq:hurwitzsum1}, the base case $\H_{0,1}^{[a],\ell}(a \vbar) = \frac{1}{a} \delta_{\ell, 1}$, and the cut-and-join recursion
\begin{align} \label{eq:cutjoin}
\H_{g,n}^{[a],\ell}(\mu_1\vbar \mmu_{S\setminus \{1\}}) =&\, \Theta(\mu_1 + \ell - a -1) \sum_{p=1}^\ell \sum_{i=2}^n\H_{g,n-1}^{[a],p}(\mu_1+\mu_i \vbar \mmu_{S\setminus \{1,i\}})\\
&+\sum_{\alpha + \beta = \mu_1} \sum_{p=1}^\ell \beta \, \H_{g-1,n+1}^{[a],p}(\alpha \vbar \mmu_{S\setminus \{1\}}, \beta) \nonumber \\ 
&+ \sum_{\alpha+ \beta = \mu_1}\mathop{\sum_{g_1 + g_2 = g}}_{I \sqcup J = S \setminus \{1\}}\sum_{p=1}^\ell \frac{|\mmu_J| + \alpha}{|\mmu|} \beta \, \H_{g_1,|I| + 1}^{[a]}(\mmu_I, \beta) \, \H_{g_2,|J|+1}^{[a],p}(\alpha \vbar \mmu_{ J}). \nonumber
\end{align}
Here, $\Theta$ denotes the Heaviside step function.
\end{theorem}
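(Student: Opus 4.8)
The plan is to derive the recursion by a cut-and-join analysis centred on the \emph{last} transposition of the monotone sequence, and then to note that the resulting relations are well-founded. Fix a tuple $(\sigma_0, \sigma_1, \ldots, \sigma_m)$ contributing to $\H_{g,n}^{[a],\ell}(\mu_1 \vbar \mmu_{S \setminus \{1\}})$ and write $\pi = \sigma_0 \sigma_1 \cdots \sigma_m$, whose distinguished cycle has length $\mu_1$. Monotonicity singles out $\sigma_m = (r_m ~ s_m)$ as the transposition with the largest second entry, making it the canonical factor to strip off: the truncated tuple $(\sigma_0, \ldots, \sigma_{m-1})$ is again monotone, of length $m-1$, with product $\pi' = \pi \sigma_m$. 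Equivalently, and more convenient for pinning down the weights, I would read the recursion bottom-up, where every tuple for $(g, n, \mmu)$ arises by appending a maximal transposition $\sigma_m$ to a shorter monotone tuple and the coefficients on the right-hand side record the number of admissible $\sigma_m$. The refined enumeration of Section~\ref{sec:cutjoin} is arranged precisely so that $\sigma_m$ acts on the distinguished cycle, which keeps the first slot under control throughout.

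The analysis then splits according to whether $r_m$ and $s_m$ lie in different cycles of $\pi$ or in the same cycle. In the first case $\sigma_m$ joins the distinguished $\mu_1$-cycle to another $\mu_i$-cycle, so $\pi'$ carries a single cycle of length $\mu_1 + \mu_i$ with the genus unchanged and $n$ decreased by one, producing the join line. In the second case $\sigma_m$ cuts the $\mu_1$-cycle into cycles of lengths $\alpha$ and $\beta$ with $\alpha + \beta = \mu_1$, and here I would further ask whether $\pi'$ stays transitive: the non-separating cut keeps the factorisation connected while dropping the genus by one and raising $n$ by one, giving the second line, whereas the separating cut disconnects it into two transitive tuples indexed by $g_1 + g_2 = g$ and $I \sqcup J = S \setminus \{1\}$, giving the third line. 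The weight $\beta$ in the cut lines is the number of transpositions effecting a given split with the distinguished point in the $\alpha$-cycle, and the ratio $\tfrac{|\mmu_J| + \alpha}{|\mmu|}$ in the separating line reconciles the global normalisation $\tfrac{1}{|\mmu|!}$ with the per-component normalisations, counting the distributions of the ground set of size $|\mmu|$ between the two pieces.

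The genuinely new ingredient, and the step I expect to be the main obstacle, is tracking the orbifold index $\ell$ through these operations. The index records how the distinguished cycle meets the $(a, a, \ldots, a)$-cycle structure of $\sigma_0$, and it is engineered so that appending the maximal transposition changes $\ell$ in a controlled, local manner. This is the source of the internal sums $\sum_{p=1}^\ell$: a shorter tuple of index $p$ can be completed by $\sigma_m$ to a tuple of any index $\ell \geq p$, so reconstructing $\H_{g,n}^{[a],\ell}$ requires summing the smaller contributions over all admissible $p$. The Heaviside factor $\Theta(\mu_1 + \ell - a - 1)$ in the join line likewise encodes the orbifold constraint that $\sigma_m$ can perform that merge only when $\mu_1$ and $\ell$ together exceed the orbit size $a$. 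Verifying that this is exactly the condition forced by $\sigma_0$, that no analogous obstruction appears in the cut lines, and that the index-shift rule is the one asserted, is the delicate bookkeeping that the case analysis of Section~\ref{sec:cutjoin} must supply.

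Finally, I would check that the stated data determine all the numbers. Every term on the right-hand side of~\eqref{eq:cutjoin} involves exactly $m-1$ transpositions; in the separating term the two factors carry nonnegative values of $2g - 2 + n + \tfrac{|\mmu|}{a}$ summing to $m - 1$, so each is strictly less than $m$. Hence every term has strictly smaller $2g - 2 + n + \tfrac{|\mmu|}{a}$ than the left-hand side, and an induction on this quantity reduces every refined number to the base case $\H_{0,1}^{[a],\ell}(a \vbar) = \tfrac{1}{a}\delta_{\ell,1}$. Equation~\eqref{eq:hurwitzsum1} then recovers the unrefined numbers $\H_{g,n}^{[a]}$, completing the determination.
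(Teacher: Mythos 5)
Your overall strategy --- peel off the maximal transposition $\sigma_m$, split into three cases according to whether it merges the distinguished cycle with another labelled cycle, performs a non-separating cut, or performs a separating cut, and then induct on $m = 2g-2+n+|\mmu|/a$ --- is the same as the paper's; the paper merely packages the bookkeeping in the language of monotone monodromy graphs, which it concedes is optional. However, an essential ingredient is missing. The separating term can only be written as a product $\H_{g_1,|I|+1}^{[a]}(\mmu_I,\beta)\,\H_{g_2,|J|+1}^{[a],p}(\alpha\vbar\mmu_J)$ if the number of transitive monotone factorisations is unchanged when the restriction of $\sigma_0$ to a component is replaced by the canonical cycle-type-$(a,\ldots,a)$ permutation on a relabelled ground set. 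This is not automatic, because conjugation destroys monotonicity, so the usual relabelling argument fails. The paper isolates this as Lemma~\ref{lem:cycletype} and proves it via Jucys--Murphy elements (the count is a coefficient of a central element $h_m(J_2,\ldots,J_d)\,C_\mmu$), followed by an inclusion--exclusion to pass to the transitive count. Without some version of this invariance your product decomposition is unjustified.

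Second, your explanation of the factor $\frac{|\mmu_J|+\alpha}{|\mmu|}$ would give the wrong answer. Free redistribution of the ground set between the two components cancels exactly against the $\frac{1}{d!}$ normalisations (and likewise when the $\sigma_0$-cycles are distributed in blocks of size $a$), so ``reconciling the normalisations'' yields a factor of $1$, just as in the non-monotone cut-and-join. The true origin of the ratio is monotonicity: writing $ak_1 = |\mmu_I|+\beta$ and $ak_2 = |\mmu_J|+\alpha$, the $\sigma_0$-cycle containing $s_m$ (the largest second entry in the whole sequence) is forced into the component carrying the distinguished cycle, so only $\binom{k-1}{k_1}$ of the $\binom{k}{k_1}$ block-distributions are admissible, and $\binom{k-1}{k_1}\frac{a^{k_1}k_1!\,a^{k_2}k_2!}{a^{k}k!} = \frac{k_2}{k} = \frac{|\mmu_J|+\alpha}{|\mmu|}$; this is the content of the $n_\Gamma$ factor in Lemma~\ref{lem:multiplicity} and of Proposition~\ref{prop:ncutjoin}. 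Relatedly, the counter analysis you defer is not a routine check: the rules that $p\le\ell$ (from $s_{m-1}\le s_m$) and that $\Theta(\mu_1+\ell-a-1)$ enforces $\mu_1\ge a-\ell+1$ (the distinguished cycle must contain $ak-a+\ell,\ldots,ak$) are exactly what make the refined quantities close under the recursion, and the argument is not complete until they are verified. Your well-foundedness argument at the end is correct.
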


Analogous to other problems in Hurwitz theory and enumerative geometry, we define the \emph{free energies} to be the following generating functions, for $g \geq 0$ and $n \geq 1$.
\[
\vec{F}_{g,n}^{[a]}(x_1, \ldots, x_n) = \sum_{\mu_1, \ldots, \mu_n = 1}^\infty \H_{g,n}^{[a]}(\mu_1, \ldots, \mu_n) \prod_{i=1}^n x_i^{\mu_i}
\]
It is common to assemble the free energies into the following \emph{wave function}.
\[
\vec{Z}^{[a]}(x, \h) = \exp \left[ \sum_{g=0}^\infty \sum_{n=1}^\infty \frac{\h^{2g-2+n}}{n!} \, \vec{F}_{g,n}^{[a]}(x, \ldots, x) \right]
\]
One can interpret the wave function as a generating function for possibly disconnected branched covers, assembled according to the degree and the Euler characteristic $2g-2+n$. It follows that its coefficients simply enumerate sequences of monotone transpositions. It is this observation that allows us to deduce the following result.

\begin{theorem}[Quantum curve] \label{thm:qcurve}
The wave function for monotone orbifold Hurwitz numbers is given by
\[
\vec{Z}^{[a]}(x, \h) = 1 + \sum_{k=1}^\infty \frac{x^{ak}}{k! \, a^k \, \h^k} \prod_{j=1}^{ak-1} \frac{1}{1-j\h}.
\]
Furthermore, it satisfies the following differential equation, where $\xx = x$ and $\yy = -\h \frac{\partial}{\partial x}$ are multiplication and differentiation operators.
\[
\left[ \xx^{a-1} + \prod_{j=0}^{a-1} (1 + \xx \yy + j\h) \, \yy \right] \vec{Z}^{[a]}(x, \h) = 0
\]
\end{theorem}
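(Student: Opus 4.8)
The plan is to establish the two assertions separately, exploiting the interpretation of $\vec{Z}^{[a]}$ as a generating function for possibly disconnected covers. Since $\vec{Z}^{[a]}$ is the exponential of the connected free energies graded by the Euler characteristic $2g-2+n$, I would first argue that its coefficients enumerate arbitrary (not necessarily transitive) tuples $(\sigma_0, \sigma_1, \ldots, \sigma_m)$ in $S_d$ with $\sigma_0$ of cycle type $(a, \ldots, a)$ and $\sigma_1, \ldots, \sigma_m$ a monotone transposition sequence: the power of $x$ records $d = |\mmu|$, and because $2g-2+n = m - \tfrac{|\mmu|}{a}$ the exponent of $\h$ equals $m$ minus the number $k = d/a$ of cycles of $\sigma_0$. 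The only point requiring care is that the exponential formula genuinely applies despite the global monotonicity constraint. Here I would note that restricting a global monotone sequence to an orbit of $\langle \sigma_0, \ldots, \sigma_m\rangle$ yields a monotone sequence, and conversely that monotone sequences on the orbits merge into a \emph{unique} global monotone sequence by sorting the transpositions on their larger index; since distinct orbits have disjoint supports, ties among larger indices occur only within a single orbit, so the merge is canonical and disconnected configurations biject with set partitions decorated by connected ones, carrying the $\h$- and $x$-gradings multiplicatively. (The factor $\tfrac1{n!}$ in the wave function precisely unlabels the cycles of the product.)

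Granting this, the coefficient of $x^{ak}$ factorises, because in the disconnected count $\sigma_0$ and the transposition sequence are chosen independently. There are $\tfrac{(ak)!}{a^k\,k!}$ permutations of cycle type $(a, \ldots, a)$ in $S_{ak}$, which supplies the prefactor $\tfrac{1}{a^k k!}$ after dividing by $(ak)!$. It then remains to compute the $\h$-generating function for monotone sequences of transpositions in $S_{ak}$, and the key observation is that monotonicity constrains only the larger indices to be weakly increasing. Grouping the transpositions by their common larger index $s \in \{2, \ldots, ak\}$, each transposition in the block of index $s$ is $(r~s)$ with $r \in \{1, \ldots, s-1\}$ chosen freely, so the number of such sequences of length $m$ equals $\stirling{ak-1+m}{ak-1}$ and summing against $\h^m$ gives $\prod_{j=1}^{ak-1}(1-j\h)^{-1}$. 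Reinstating the $\h^{-k}$ from the Euler characteristic grading reproduces exactly the claimed formula for $\vec{Z}^{[a]}$.

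For the quantum curve, write $\vec{Z}^{[a]} = \sum_{k \geq 0} c_k x^{ak}$ with $c_k = \tfrac{1}{a^k k! \h^k}\prod_{j=1}^{ak-1}(1-j\h)^{-1}$. Since $\yy x^n = -\h n x^{n-1}$, the operator $\xx\yy$ acts on $x^n$ as multiplication by $-\h n$. Applying $\yy$ to $\vec{Z}^{[a]}$ gives $\sum_{k \geq 1} c_k (-\h a k) x^{ak-1}$, and then $\prod_{j=0}^{a-1}(1 + \xx\yy + j\h)$ multiplies $x^{ak-1}$ by $\prod_{j=0}^{a-1}(1 - \h(ak-1-j)) = \prod_{i=a(k-1)}^{ak-1}(1 - i\h)$, whereas $\xx^{a-1}\vec{Z}^{[a]} = \sum_{k \geq 1} c_{k-1} x^{ak-1}$. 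As both contributions are supported on $\{x^{ak-1} : k \geq 1\}$, the quantum curve is equivalent to the scalar identities
\[
c_{k-1} = \h\,ak\,c_k \prod_{i=a(k-1)}^{ak-1}(1 - i\h), \qquad k \geq 1,
\]
each of which follows directly from the explicit formula: the product $\prod_{i=a(k-1)}^{ak-1}(1-i\h)$ cancels the denominator factors $j = a(k-1), \ldots, ak-1$ of $c_k$, and the prefactor simplifies via $\tfrac{ak}{a^k k!} = \tfrac{1}{a^{k-1}(k-1)!}$, reproducing $c_{k-1}$.

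I expect the main obstacle to be the rigorous justification of the disconnected interpretation together with its gradings, that is, verifying that the global monotonicity constraint decouples across connected components so the exponential formula holds verbatim; the relabelling invariance of the monotone structure under order-isomorphisms of the orbits is what makes this work. Once that is secured, both the combinatorial count via the block decomposition and the verification of the quantum curve via the telescoping recurrence for $c_k$ are elementary.
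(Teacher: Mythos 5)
Your proposal is correct and follows essentially the same route as the paper: interpret $\vec{Z}^{[a]}$ via the exponential formula as a disconnected count, factor the coefficient of $x^{ak}$ into the $\tfrac{1}{a^k k!}$ choice of $\sigma_0$ times the generating function $\prod_{j=1}^{ak-1}(1-j\h)^{-1}$ for monotone transposition sequences, and then verify the quantum curve by the eigenvalue computation $(1+\xx\yy+j\h)x^{ak-1}=(1-(ak-1-j)\h)x^{ak-1}$ and the resulting telescoping of the product. The only differences are presentational: you derive the monotone-sequence count directly from the block decomposition by larger index and spell out why the exponential formula survives the global monotonicity constraint, whereas the paper cites these facts from the monotone Hurwitz literature, and you package the operator check as a coefficient recurrence rather than summing the two displayed series.
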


The semi-classical limit of the quantum curve is obtained by setting $\h = 0$ and replacing the operators $\xx$ and $\yy$ with commuting variables $x$ and $y$, respectively. From this procedure, one obtains the algebraic curve
\[
x^{a-1} + y(1+xy)^a = 0,
\]
which has the rational parametrisation
\begin{equation} \label{eq:scurve}
x(z) = z(1-z^a) \qquad \qquad \text{and} \qquad \qquad y(z) = \frac{z^{a-1}}{z^a-1}.
\end{equation}

The topological recursion of Chekhov, Eynard and Orantin~\cite{che-eyn,eyn-ora07} takes as input the data of a spectral curve and outputs multidifferentials $\omega_{g,n}$ for $g \geq 0$ and $n \geq 1$. Physics-inspired arguments suggest that applying the topological recursion to the spectral curve obtained as the semi-classical limit of a quantum curve should reproduce the corresponding free energies~\cite{guk-sul}. This observation leads directly to the following conjecture.

\begin{conjecture}[Topological recursion] \label{con:toprec}
Topological recursion applied to the spectral curve defined by equation~\eqref{eq:scurve} produces correlation differentials that satisfy
\[
\omega_{g,n} = \sum_{\mu_1, \ldots, \mu_n = 1}^\infty \H_{g,n}^{[a]}(\mu_1, \ldots, \mu_n) \prod_{i=1}^n \mu_i x_i^{\mu_i-1} \, \dd x_i, \qquad \text{for } (g,n) \neq (0,2).
\]
\end{conjecture}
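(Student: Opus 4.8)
The plan is to prove the conjecture by the standard strategy of showing that both sides of the claimed identity satisfy the same recursion with the same initial data. Write $\omega_{g,n}^{\mathrm{TR}}$ for the multidifferentials produced by topological recursion on the spectral curve~\eqref{eq:scurve}, and $\omega_{g,n}^{\mathrm{H}}$ for the generating series on the right-hand side of the conjecture, regarded as a differential in the variables $z_1, \ldots, z_n$ after the substitution $x_i = z_i(1 - z_i^a)$. The first task is to establish that each $\omega_{g,n}^{\mathrm{H}}$ with $(g,n)$ stable is a well-defined meromorphic multidifferential on the curve, with poles only at the $a$ ramification points of $x$, namely the solutions of $z^a = \frac{1}{a+1}$ where $\dd x = (1-(a+1)z^a)\,\dd z$ vanishes; this controlled pole structure can be extracted from the polynomial structure of the free energies implied by the cut-and-join recursion. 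Granting this, it suffices to verify that the $\omega_{g,n}^{\mathrm{H}}$ obey the Eynard--Orantin recursion, since that recursion together with the base cases $\omega_{0,1} = y\,\dd x$ and $\omega_{0,2} = \frac{\dd z_1\,\dd z_2}{(z_1-z_2)^2}$ determines all stable $\omega_{g,n}^{\mathrm{TR}}$ uniquely.

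The engine of the proof is the cut-and-join recursion of Theorem~\ref{thm:cutjoin}. First I would pass from the refined numbers $\H_{g,n}^{[a],\ell}$ to the unrefined $\H_{g,n}^{[a]}$ using~\eqref{eq:hurwitzsum1}, and assemble the recursion into a relation among the free energies $\vec{F}_{g,n}^{[a]}$. Multiplying equation~\eqref{eq:cutjoin} by the monomial $\prod_i x_i^{\mu_i}$ and summing over all $\mmu$ converts the three terms on the right-hand side into differential-operator expressions in the $x_i$: the merge term produces a coupling between $x_1$ and $x_i$, the genus-reducing term produces two points colliding at a common argument, and the splitting term produces a product of two lower free energies. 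The crucial manipulation is then a change of variables to the $z_i$ together with a contour-integral (Laplace-type) reformulation, which recasts the summation over $\mmu$ as an operation extracting principal parts at the ramification points. This is the step that turns a recursion on power-series coefficients into a recursion on residues, and it is where the genus-reducing and splitting terms of~\eqref{eq:cutjoin} align with the two contributions $\omega_{g-1,n+1}(z,\sigma(z),z_2,\ldots,z_n)$ and $\sum \omega_{g_1}\omega_{g_2}$ appearing inside the Eynard--Orantin residue, while the merge term is absorbed into the recursion kernel and the linear loop-equation structure.

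Concretely, I would compute the recursion kernel $K(z_0,z) = \frac{1}{2}\bigl(\omega_{0,1}(z)-\omega_{0,1}(\sigma(z))\bigr)^{-1}\!\int^z_{\sigma(z)}\omega_{0,2}(z_0,\cdot)$, where $\sigma$ is the local involution exchanging sheets at each ramification point, and then check that contracting the right-hand side of the cut-and-join-derived relation against $K$ and taking residues at the zeros of $\dd x$ reproduces exactly $\omega_{g,n}^{\mathrm{H}}$. Because the algebraic curve $x^{a-1} + y(1+xy)^a = 0$ is precisely the semi-classical limit of the quantum curve of Theorem~\ref{thm:qcurve}, the one-form $\omega_{0,1} = y\,\dd x$ and the location and orders of the poles are already pinned down, so this verification reduces to matching polynomial data at finitely many points.

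The step I expect to be the main obstacle is controlling the unusual weight $\frac{|\mmu_J| + \alpha}{|\mmu|}$ that appears in the splitting term of~\eqref{eq:cutjoin}. Unlike the clean splitting term in the simple-Hurwitz cut-and-join, this rational factor mixes the degrees carried by the two components with the total degree, and it does not obviously translate into a local residue operation. I anticipate needing an integration-by-parts or telescoping identity to absorb the $\frac{1}{|\mmu|}$ into a primitive (an antiderivative in $x$) so that the factor becomes a genuine local object compatible with the kernel; handling this carefully, together with verifying the meromorphicity and pole-order claims of the first step, constitutes the technical heart of the argument. Finally, the exclusion of $(g,n)=(0,2)$ is explained by the mismatch between the Hurwitz series $\vec{F}_{0,2}^{[a]}$ and the Bergman kernel, which differ by the universal term $\frac{\dd x_1\,\dd x_2}{(x_1-x_2)^2}$; this discrepancy must be tracked wherever $\omega_{0,2}$ enters the recursion.
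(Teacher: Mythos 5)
First, a point of orientation: the statement you are proving is Conjecture~\ref{con:toprec} of the paper, and the paper contains \emph{no proof of it}. The authors offer only evidence --- the semi-classical limit of the quantum curve of Theorem~\ref{thm:qcurve} recovers the spectral curve~\eqref{eq:scurve}, the $(g,n)=(0,1)$ free energy reproduces $y$ as a function of $x$, and numerical checks for small $g$, $n$, $a$. So there is no paper proof to match your proposal against; the question is whether your outline closes the problem on its own, and it does not. Your plan is the standard one (show both sides satisfy the Eynard--Orantin recursion with the same initial data), but every step where this strategy is known to be hard for monotone-type Hurwitz numbers is left as an announcement rather than an argument.

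Two gaps are concrete and serious. (a) Your first step asserts that each $\omega_{g,n}^{\mathrm{H}}$ extends to a meromorphic multidifferential on the curve with poles only at the zeros of $\dd x$, claiming this follows from ``the polynomial structure of the free energies implied by the cut-and-join recursion.'' No such implication is established: quasi-polynomiality of $\H_{g,n}^{[a]}$ is precisely Conjecture~\ref{con:mopoly}, which the paper states as a \emph{consequence} of Conjecture~\ref{con:toprec}. Invoking it is circular, and deriving it directly from Theorem~\ref{thm:cutjoin} is itself an open problem (even in the case $a=1$, the analogous Theorem~\ref{thm:mpoly} required a separate nontrivial proof). (b) You correctly identify the nonlocal weight $\frac{|\mmu_J|+\alpha}{|\mmu|}$ in the splitting term of~\eqref{eq:cutjoin} as the main obstacle, but you resolve it only with ``I anticipate needing an integration-by-parts or telescoping identity.'' This is the technical heart, and the suggested mechanism does not work as stated: absorbing $1/|\mmu|$ by passing to a primitive in $x$ yields an antiderivative of the \emph{product} $\vec F_{g_1}\vec F_{g_2}$, not a product of local objects, so it does not become a residue computation at the ramification points. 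A further structural issue compounds this: equation~\eqref{eq:cutjoin} is a recursion for the \emph{refined} numbers $\H_{g,n}^{[a],\ell}$, with the Heaviside cutoff $\Theta(\mu_1+\ell-a-1)$ and counter sums $\sum_{p=1}^{\ell}$; summing via~\eqref{eq:hurwitzsum1} does not produce a closed relation for the unrefined $\vec F_{g,n}^{[a]}$, so your loop-equation derivation would have to be carried out for a family of refined free energies whose analytic continuation to the curve you have not addressed at all. (As a minor convention slip, your base case $\omega_{0,1}=y\,\dd x$ has the opposite sign to the paper's $\omega_{0,1}=-y\,\dd x(z_1)$.) In short, the proposal is a reasonable road map for attacking an open conjecture, but it proves nothing: the two steps you defer are exactly the reasons the statement remained conjectural in the paper.
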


The structure of the paper is as follows.
\begin{itemize}
\item In Section~\ref{sec:hurwitz}, we briefly review some of the known results on simple Hurwitz numbers and their variations. These form the inspiration for this paper, which discusses preliminary work towards finding analogues of these results in the case of monotone orbifold Hurwitz numbers.
\item In Section~\ref{sec:cutjoin}, we derive the cut-and-join recursion of Theorem~\ref{thm:cutjoin}, which allows one to recursively calculate monotone orbifold Hurwitz numbers. The analysis has been carried out using the language of monotone monodromy graphs, which bear similarity to combinatorial structures arising in tropical geometry.
\item In Section~\ref{sec:qcurve}, we prove Theorem~\ref{thm:qcurve}, which determines the quantum curve for monotone orbifold Hurwitz numbers. 
\item In Section~\ref{sec:toprec}, we discuss Conjecture~\ref{con:toprec}, which relates the monotone orbifold Hurwitz numbers to the topological recursion applied to an explicit spectral curve. Furthermore, we provide some evidence to support the conjecture.
\end{itemize}

The authors would like to thank the organisers of the conference ``Embedded Graphs'' (St. Petersburg, October 2014) during which this project was initiated, as well as Ga\"etan Borot, Hannah Markwig, Fedor Petrov, and Johannes Rau for fruitful discussions. The authors would also like to thank the anonymous referee whose remarks helped to make the text more readable.

\section{Hurwitz numbers: simple, orbifold, and monotone} \label{sec:hurwitz}

The simple Hurwitz number $H_{g,n}(\mu_1, \ldots, \mu_n)$ is the weighted count of genus $g$ connected branched covers $(\Sigma; p_1, \ldots, p_n) \to (\mathbb{CP}^1; \infty)$ with $m$ fixed points with simple ramification and branching order $\mu_i$ at the preimage $p_i$ of $\infty$. One may attach a monodromy permutation to each ramification point and invoke the Riemann existence theorem to show that the simple Hurwitz number equivalently counts certain factorisations in the symmetric group $S_{|\mmu|}$. More precisely, we make the following definitions.

\begin{definition} \label{def:fact}
Let $\mmu = (\mu_1, \ldots, \mu_n)$ be a tuple of positive integers. A \emph{Hurwitz factorisation of type $(g, \mmu)$} is a tuple $(\sigma_0, \sigma_1, \ldots, \sigma_m)$ of permutations in $S_{|\mmu|}$ such that
\begin{itemize}
\item $m = 2g-2+n+c(\sigma_0)$, where $c(\sigma_0)$ denotes the number of cycles in the permutation $\sigma_0$;
\item $\sigma_1, \ldots, \sigma_m$ are transpositions;
\item the cycles of $\sigma_0 \sigma_1 \cdots \sigma_m$ are labelled $1, 2, \ldots, n$ such that cycle $i$ has length $\mu_i$; and
\item $\sigma_0, \sigma_1, \ldots, \sigma_m$ generate a transitive subgroup of $S_{|\mmu|}$.
\end{itemize}
If $\sigma_0$ is equal to the identity, then we call the factorisation \emph{simple}. If $\sigma_0$ has cycle type $(a, a, \ldots, a)$, then we call the factorisation \emph{$a$-orbifold}. If $\sigma_1, \ldots, \sigma_m$ is a monotone sequence of transpositions, then we refer to the factorisation as \emph{monotone}. Recall that $\sigma_1, \ldots, \sigma_m$ is a monotone sequence of transpositions if $\sigma_i = (r_i ~ s_i)$ with $r_i < s_i$ for $i = 1, 2, \ldots, m$ and $s_1 \leq s_2 \leq \cdots \leq s_m$.
\end{definition}

\begin{definition}
The \emph{simple Hurwitz number} $H_{g,n}(\mmu)$ is $\frac{1}{|\mmu|!}$ multiplied by the number of simple Hurwitz factorisations of type $(g, \mmu)$. We similarly define the \emph{$a$-orbifold Hurwitz number} $H_{g,n}^{[a]}(\mmu)$, the \emph{monotone Hurwitz number} $\H_{g,n}(\mmu)$, and the \emph{monotone $a$-orbifold Hurwitz number} $\H_{g,n}^{[a]}(\mmu)$.
\end{definition}

When calculating monotone orbifold Hurwitz numbers, it is convenient to use the following observation. There is an equal number of monotone orbifold Hurwitz factorisations $(\sigma_0, \sigma_1, \ldots, \sigma_m)$ of type $(g, \mmu)$ for each choice of $\sigma_0$ of cycle type $(a, a, \ldots, a)$. This is a consequence of the following result.

\begin{lemma} \label{lem:cycletype}
The number of factorisations $\sigma \sigma_1 \cdots \sigma_m = \tau$, where $\sigma$ is a fixed permutation, $\tau$ is a permutation of cycle type $\mmu$, and $\sigma_1, \ldots, \sigma_m$ is a monotone sequence of transpositions, depends only on the cycle type of $\sigma$. Moreover, the result still holds if we restrict to transitive monotone factorisations --- in other words, those in which $\sigma, \sigma_1, \ldots, \sigma_m$ generate a transitive subgroup of $S_{|\mmu|}$.
\end{lemma}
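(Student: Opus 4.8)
\emph{The plan} is to reformulate the count inside the group algebra $\mathbb{C}[S_N]$, where $N = |\mmu|$, and to exploit the centrality of symmetric functions of the Jucys--Murphy elements. The tempting first move --- conjugating an entire factorisation $\sigma \sigma_1 \cdots \sigma_m = \tau$ by some $\pi$ in order to replace $\sigma$ by an arbitrary element $\pi \sigma \pi^{-1}$ of its conjugacy class --- fails, because conjugation does not preserve the monotonicity of $\sigma_1, \ldots, \sigma_m$. Circumventing this is the crux of the argument, and everything below is arranged to do so without ever conjugating the individual transpositions.

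First I would introduce the Jucys--Murphy elements $J_k = \sum_{i=1}^{k-1} (i~k) \in \mathbb{C}[S_N]$ and record the elementary generating-function identity
\[
\prod_{k=1}^{N} \frac{1}{1 - z J_k} = \sum_{m=0}^\infty z^m \sum_{(\sigma_1, \ldots, \sigma_m)} \sigma_1 \cdots \sigma_m,
\]
where the inner sum runs over all monotone sequences of $m$ transpositions. This holds because a monotone sequence is exactly the data of, for each second coordinate $k$, an arbitrary word in the transpositions $(i~k)$ with $i < k$; reading these words in order of increasing $k$ reconstructs the sequence with no further choices, and the word with second coordinate $k$ contributes a power of $J_k$. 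Writing $G(z)$ for the left-hand side, the number of monotone factorisations of length $m$ with $\sigma \sigma_1 \cdots \sigma_m \in C_\mmu$ (the conjugacy class of cycle type $\mmu$) is then $[z^m] \langle \sigma\, G(z),\, \mathbf{1}_{C_\mmu} \rangle$, where $\langle \cdot, \cdot \rangle$ is the inner product on $\mathbb{C}[S_N]$ making the group elements orthonormal and $\mathbf{1}_{C_\mmu} = \sum_{\tau \in C_\mmu} \tau$. The key structural input is the theorem of Jucys and Murphy that every symmetric polynomial in $J_1, \ldots, J_N$ is central; since $G(z) = \sum_{d \geq 0} h_d(J_1, \ldots, J_N)\, z^d$ is a generating series of complete homogeneous symmetric functions, each of its coefficients lies in $Z(\mathbb{C}[S_N])$. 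Centrality of $G(z)$ and of the class sum $\mathbf{1}_{C_\mmu}$, together with the conjugation-invariance of the inner product, then gives
\[
\langle \pi \sigma \pi^{-1} G(z), \mathbf{1}_{C_\mmu} \rangle = \langle \pi (\sigma G(z)) \pi^{-1}, \pi \mathbf{1}_{C_\mmu} \pi^{-1} \rangle = \langle \sigma G(z), \mathbf{1}_{C_\mmu} \rangle,
\]
using $\pi G(z) \pi^{-1} = G(z)$ to pull the conjugation outside. Extracting the coefficient of $z^m$ proves that the (possibly disconnected) count depends only on the cycle type of $\sigma$.

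For the transitive refinement I would argue by induction on $N = |\mmu|$. The same ``group by second coordinate'' description shows that a monotone factorisation decomposes \emph{without any shuffling factor} along the orbit partition of $\langle \sigma, \sigma_1, \ldots, \sigma_m \rangle$: each transposition lies in a single block, and after the order-preserving relabelling of a block its restricted data is again a monotone factorisation. Hence the disconnected count equals a sum, over set partitions $P$ coarser than the cycle partition of $\sigma$ and over distributions of the parts of $\mmu$ among the blocks, of products of transitive counts on the blocks. Isolating the term $P = \{[N]\}$, which is exactly the transitive count we want, and moving all other terms to the other side expresses the transitive count as the disconnected count (already known to depend only on cycle type) minus a sum of products of strictly smaller transitive counts. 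Since each such $P$ amounts to a set partition of the cycles of $\sigma$, the multiplicities occurring in this sum depend only on the cycle type of $\sigma$ and on $\mmu$; combined with the inductive hypothesis, this forces the transitive count to depend only on the cycle type of $\sigma$ as well. The main obstacle throughout is the incompatibility of conjugation with monotonicity; it is resolved for the disconnected count by the Jucys--Murphy centrality and for the transitive count by the shuffle-free block decomposition.
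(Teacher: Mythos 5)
Your proof is correct and follows essentially the same route as the paper's: the unrestricted count is handled via the centrality of complete homogeneous symmetric functions of the Jucys--Murphy elements (you package $h_m$ into the generating product $\prod_k (1-zJ_k)^{-1}$ and use the invariant inner product, while the paper extracts the coefficient of $\sigma^{-1}$ in $h_m(J_2,\ldots,J_{|\mmu|})\,C_{\mmu}$, but the mechanism is identical), and the transitive refinement comes from the same shuffle-free decomposition along the orbit blocks combined with an induction (on $N$ rather than on the number of cycles of $\sigma$). There are no substantive differences.
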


\begin{proof}
Let $K_{m, \mmu}^\bullet(\sigma)$ be the number of monotone factorisations $\sigma \sigma_1 \cdots \sigma_m = \tau$, where $\tau$ has cycle type $\mmu$. Rewrite the factorisation as $\sigma_1 \cdots \sigma_m \tau^{-1} = \sigma^{-1}$ and observe that $K_{m, \mmu}^\bullet(\sigma)$ is simply the coefficient of $\sigma^{-1}$ in the element of the symmetric group algebra
\begin{equation} \label{eq:jmelements}
h_m(J_2, J_3, \ldots, J_{|\mmu|}) \, C_{\mmu} \in \mathbb{C}[S_{|\mmu|}].
\end{equation}
Here, $C_{\mmu}$ denotes the conjugacy class of permutations with cycle type $\mmu$, $h_m$ is the complete homogeneous symmetric function of degree $m$, and $J_2, J_3, \ldots$ denote the Jucys--Murphy elements
\[
J_k = (1~k) + (2~k) + \cdots + (k-1 ~ k) \in \mathbb{C}[S_{|\mmu|}], \qquad \text{for } k = 2, 3, \ldots, |\mmu|.
\]
Now we simply use the well-known fact that conjugacy classes and symmetric functions of the Jucys--Murphy elements live in the centre $Z\mathbb{C}[S_{|\mmu|}]$. So the expression in equation~\eqref{eq:jmelements} is a linear combination of conjugacy classes and it follows that each permutation in a given conjugacy class appears with the same coefficient. Therefore, $K_{m, \mmu}^\bullet(\sigma)$ depends only on the cycle type of $\sigma$.

Now let $K_{m, \mmu}^\circ(\sigma)$ be the analogous enumeration restricted to transitive monotone factorisations. If $\sigma$ is a cycle, then $K_{m, \mmu}^\circ(\sigma) = K_{m, \mmu}^\bullet(\sigma)$ and the result holds. So suppose now that $\sigma$ is the disjoint union of $k \geq 2$ cycles, which we write as $\sigma = C_1 C_2 \cdots C_k$.

Observe that every monotone factorisation can be equivalently interpreted as a union of  transitive monotone factorisations, by considering the maximal subsets on which $\langle \sigma, \sigma_1, \ldots, \sigma_m \rangle$ acts transitively. This leads to the decomposition
\[
K_{m, \mmu}^\bullet(\sigma) = K_{m, \mmu}^\circ(\sigma)  + \sum_{s=2}^k \mathop{\mathop{\sum_{I_1 \sqcup \cdots \sqcup I_s = [k]}}_{\mmu^{(1)} \sqcup \cdots \sqcup \mmu^{(s)} = \mmu}}_{m_1 + \cdots + m_s = m} \prod_{i=1}^s K_{m_i, \mmu^{(i)}}^\circ(C_{I_i}).
\]
The inner summation is over unordered partitions $I_1 \sqcup \cdots \sqcup I_s$ of $[k] = \{1, 2, \ldots, k\}$ into non-empty subsets, ordered tuples of  partitions $\mmu^{(1)} \sqcup \cdots \sqcup \mmu^{(s)}$ whose union is $\mmu$, and compositions $m_1 + \cdots + m_s$ of $m$ into positive integers. For $I \subseteq [k]$, we let $C_{I}$ denote the permutation that is the product of the disjoint cycles $C_i$ for $i \in I$ and for $J \subseteq [\ell]$, we let $\mmu$ denote the partition whose parts are $\mu_j$ for $j \in J$.

We have already deduced that $K_{m, \mmu}^\bullet(\sigma)$ depends only on the cycle type of $\sigma$. Furthermore, by induction on the number of cycles of $\sigma$, we know that the terms $K_{m_i, \mmu^{(i)}}^\circ(C_{I_i})$ appearing on the right side of the equation depend only on the cycle type of the permutation $C_{I_i}$. It follows that the remaining term $K_{m, \mmu}^\circ(\sigma)$ also depends only on the cycle type of $\sigma$.
\end{proof}

An alternative proof of Lemma~\ref{lem:cycletype} arises via the construction of a bijection between the monotone factorisations $(\sigma, \sigma_1, \cdots, \sigma_m)$ and the monotone factorisations $(\sigma', \sigma'_1, \cdots \sigma'_m)$, in which $\sigma$ and $\sigma'$ have the same cycle type while $\sigma\sigma_1\cdots \sigma_m$ and $\sigma'\sigma_1'\cdots \sigma'_m$  have cycle type $\mmu$. The basic idea is to conjugate each term of the first factorisation by the same permutation $\rho$ such that $\rho \sigma \rho^{-1} = \sigma'$, to obtain a factorisation that is not necessarily monotone. One then uses the natural action of the braid group $B_m$ on the set of factorisations with $m$ transpositions. Here, the $i$th braid group generator acts on a factorisation by sending the pair of transpositions $(\sigma_i, \sigma_{i+1})$ to $(\sigma_i \sigma_{i+1} \sigma_i, \sigma_i)$. It is possible to associate to the pair $( \rho,(\sigma, \sigma_1,\ldots, \sigma_m))$ an element of $B_m$ that sends the factorisation $(\sigma, \sigma_1, \ldots, \sigma_m)$ termwise conjugated by $\rho$  to a monotone factorisation in a canonical way, thus determining the required bijection. Furthermore, the braid group action preserves transitivity of factorisations. An explicit construction of the corresponding element of $B_m$ is beyond the scope of this paper, and we leave it as an exercise for the interested reader.

In the remainder of this section, we enumerate a number of interesting properties enjoyed by the simple Hurwitz numbers. Furthermore, we state generalisations to orbifold Hurwitz numbers and analogues for monotone Hurwitz numbers, where such results are known.

\subsection*{Cut-and-join recursion}

The cut-and-join recursion of Goulden and Jackson expresses a simple Hurwitz number in terms of simple Hurwitz numbers enumerating branched covers with fewer ramification points~\cite{gou-jac}. The combinatorial mechanism for the recursion comes from the elementary observation that
\[
\sigma_1 \sigma_2 \cdots \sigma_m = \tau \qquad \Rightarrow \qquad \sigma_1 \sigma_2 \cdots \sigma_{m-1} = \tau \sigma_m.
\]
Composing with the transposition $\sigma_m = (r~s)$ cuts one of the cycles of $\tau$ into two when $r$ and $s$ belong to the same cycle of $\tau$ and joins two of the cycles of $\tau$ into one when $r$ and $s$ belong to different cycles of $\tau$. At the level of branched covers, one can interpret this process as sending one of the simple ramification points to $\infty \in \mathbb{CP}^1$. This analysis also applies to orbifold Hurwitz numbers and the result is the following.

\begin{theorem}[Cut-and-join recursion for orbifold Hurwitz numbers~\cite{gou-jac, do-lei-nor}]
Fix a positive integer $a$ and consider the normalisation $K_{g,n}(\mu_1, \ldots, \mu_n) = \frac{1}{m!} H_{g,n}^{[a]}(\mu_1, \ldots, \mu_n)$, where $m = 2g-2+n+\frac{|\mmu|}{a}$. These numbers satisfy the recursion
\begin{align*}
m K_{g,n}(\mu_1, \ldots, \mu_n) &= \sum_{i < j} (\mu_i + \mu_j) \, K_{g,n-1}(\mmu_{S \setminus \{i,j\}}, \mu_i+\mu_j) \\
&+ \frac{1}{2} \sum_{i=1}^n \sum_{\alpha + \beta = \mu_i} \alpha \beta \bigg[ K_{g-1,n+1}(\mmu_{S \setminus \{i\}}, \alpha, \beta) + \mathop{\sum_{g_1+g_2=g}}_{I \sqcup J = S \setminus \{i\}} K_{g_1, |I|+1}(\mmu_I, \alpha) \, K_{g_2, |J|+1}(\mmu_J, \beta) \bigg],
\end{align*}
where $S = \{1, 2, \ldots, n\}$ and $\mmu_I = \{\mu_{i_1}, \ldots, \mu_{i_k}\}$ for $I = \{i_1, \ldots, i_k\}$.
\end{theorem}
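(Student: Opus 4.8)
\textbf{The plan} is to run the classical Goulden--Jackson cut-and-join argument for simple Hurwitz numbers, treating the orbifold permutation $\sigma_0$ as a passive spectator. Write $d = |\mmu|$ and let $F_{g,n}(\mmu) = d! \, H_{g,n}^{[a]}(\mmu) = m! \, d! \, K_{g,n}(\mmu)$ be the number of $a$-orbifold Hurwitz factorisations of type $(g,\mmu)$, so that the claimed identity $m K_{g,n} = (\text{right-hand side})$ is equivalent to $F_{g,n}(\mmu) = (m-1)! \, d! \cdot (\text{right-hand side})$. First I would take a factorisation $(\sigma_0, \sigma_1, \ldots, \sigma_m)$ with $\sigma_0 \cdots \sigma_m = \tau$ and delete the final transposition, setting $\tilde\tau = \sigma_0 \cdots \sigma_{m-1} = \tau \sigma_m$. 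Since $\sigma_0$ is untouched, it retains cycle type $(a, a, \ldots, a)$ and restricts to a product of $a$-cycles on every orbit of $\langle \sigma_0, \ldots, \sigma_{m-1} \rangle$; this is the only point at which the orbifold hypothesis enters, and it lets the remaining analysis proceed exactly as in the simple case.

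Next I would classify the factorisations by the action of $\sigma_m = (r~s)$ on $\tau$, using the elementary cut-and-join fact quoted above. If $r, s$ lie in different cycles of $\tau$, say cycles $i$ and $j$, then $\tilde\tau$ merges them; a short argument shows $\tilde\tau$ must be transitive (a single transposition can bridge at most two orbits, and here $r, s$ already share a cycle of $\tilde\tau$), and comparing Euler characteristics forces its type to be $(g, (\mmu_{S \setminus \{i,j\}}, \mu_i + \mu_j))$. Reconstructing $\tau$ from $\tilde\tau$ then amounts to cutting the cycle of length $\mu_i + \mu_j$ into labelled pieces of lengths $\mu_i$ and $\mu_j$, for which there are exactly $\mu_i + \mu_j$ choices of $\sigma_m$; using $F_{g,n-1} = (m-1)! \, d! \, K_{g,n-1}$ this yields the first sum on the right, over unordered pairs $i < j$. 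If instead $r, s$ lie in the same cycle of $\tau$, say cycle $i$ of length $\mu_i$, then $\tilde\tau$ splits that cycle into two of lengths $\alpha$ and $\beta$ with $\alpha + \beta = \mu_i$, and reconstructing $\tau$ means joining an $\alpha$-cycle to a $\beta$-cycle, for which there are $\alpha\beta$ choices of $\sigma_m$. Here $\tilde\tau$ is either transitive --- of type $(g-1, (\mmu_{S \setminus \{i\}}, \alpha, \beta))$ by Euler characteristic --- or has exactly two components; summing over the ordered split $\alpha + \beta = \mu_i$ and dividing by $2$ produces the $K_{g-1,n+1}$ term and the disconnected term respectively.

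\textbf{The hard part} will be the bookkeeping for the disconnected case. There I must count factorisations of $\tilde\tau$ whose two transitive components have supports $D_1 \sqcup D_2 = \{1, \ldots, d\}$ of sizes $d_1 = |\mmu_I| + \alpha$ and $d_2 = |\mmu_J| + \beta$, types $(g_1, (\mmu_I, \alpha))$ and $(g_2, (\mmu_J, \beta))$, and $m_1$ and $m_2 = m - 1 - m_1$ transpositions. Building such a factorisation requires choosing the ground-set partition ($\binom{d}{d_1}$ ways), interleaving the two blocks of transpositions within the ordered tuple ($\binom{m-1}{m_1}$ ways), and choosing the bridging transposition $\sigma_m$ ($\alpha\beta$ ways). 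The crucial point is that the normalisation $K = \frac{1}{m! \, d!} H$ is engineered so that these binomials cancel: writing $F_{g_i} = m_i! \, d_i! \, K_{g_i}$ and using $\binom{d}{d_1} d_1! \, d_2! = d!$ together with $\binom{m-1}{m_1} m_1! \, m_2! = (m-1)!$, all factorials collapse and one is left with $\frac{1}{2} \sum \alpha\beta \, K_{g_1, |I|+1}(\mmu_I, \alpha) \, K_{g_2, |J|+1}(\mmu_J, \beta)$. I would close by checking that the three cases are exhaustive and mutually exclusive, that each factorisation is counted once (with the factor $\tfrac12$ correcting for the choice of which split cycle is called $\alpha$), and that summing the three contributions recovers $F_{g,n}(\mmu) = (m-1)! \, d! \cdot (\text{right-hand side})$, thereby establishing the recursion.
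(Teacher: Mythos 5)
Your proposal is correct and follows exactly the cut-and-join mechanism that the paper itself invokes for this theorem: the paper does not supply a detailed proof but cites \cite{gou-jac, do-lei-nor} and sketches precisely this argument (delete $\sigma_m$, observe that $\tau\sigma_m$ cuts or joins cycles of $\tau$, and track transitivity), with $\sigma_0$ playing no active role beyond forcing each component to have degree divisible by $a$. Your factorial bookkeeping for the normalisation $K_{g,n} = \tfrac{1}{m!}H^{[a]}_{g,n}$ and the symmetry factor $\tfrac12$ are handled correctly, so there is nothing to add.
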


A similar cut-and-join analysis applies to the monotone Hurwitz numbers, in which case one obtains the following result.

\begin{theorem}[Cut-and-join recursion for monotone Hurwitz numbers~\cite{gou-gua-nov13a}]
The monotone Hurwitz numbers satisfy the recursion
\begin{align*}
\mu_1 \H_{g,n}(\mu_1, \ldots, \mu_n) &= \sum_{i=2}^n (\mu_1 + \mu_i) \, \H_{g,n-1}(\mmu_{S \setminus \{1, i\}}, \mu_1+\mu_i) \\
&+ \sum_{\alpha + \beta = \mu_1} \alpha \beta \bigg[ \H_{g-1,n+1}(\mmu_{S \setminus \{i\}}, \alpha, \beta) + \mathop{\sum_{g_1+g_2=g}}_{I \sqcup J = S \setminus \{1\}} \H_{g_1, |I|+1}(\mmu_I, \alpha) \, \H_{g_2, |J|+1}(\mmu_J, \beta) \bigg].
\end{align*}
\end{theorem}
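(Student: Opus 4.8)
The plan is to first prove a recursion at the level of a single fixed target permutation, and only afterwards translate it into the stated recursion among Hurwitz numbers. Write $N = |\mmu|$ and, for a fixed $\tau \in S_N$ of cycle type $\mmu$, let $F_m^\bullet(\tau)$ be the number of (not necessarily transitive) monotone factorisations $\sigma_1 \cdots \sigma_m = \tau$ into $m$ transpositions. The elementary observation I would start from is that if $\tau$ moves the largest letter $N$, then in any monotone factorisation the last transposition $\sigma_m = (r_m~s_m)$ must satisfy $s_m = N$: since $s_1 \leq \cdots \leq s_m$, if no $\sigma_i$ involved $N$ then $\tau$ would fix $N$. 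Deleting $\sigma_m = (r_m~N)$ then yields a monotone factorisation of $\tau(r_m~N)$ of length $m-1$, and conversely appending $(j~N)$ to any monotone factorisation of $\tau(j~N)$ preserves monotonicity because $N$ is maximal. This gives the bijective recursion $F_m^\bullet(\tau) = \sum_{j=1}^{N-1} F_{m-1}^\bullet(\tau(j~N))$.

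Next I would classify the terms on the right according to the position of $j$. If $j$ lies in a cycle of $\tau$ other than the one containing $N$, then $\tau(j~N)$ \emph{joins} those two cycles; if $j$ lies in the same cycle as $N$, then $\tau(j~N)$ \emph{cuts} that cycle in two. Taking the distinguished cycle to be cycle $1$ (of length $\mu_1$, containing $N$), the join terms merge cycle $1$ with some cycle $i$ into a cycle of length $\mu_1 + \mu_i$, while the cut terms split cycle $1$ into cycles of sizes $\alpha$ and $\beta$ with $\alpha + \beta = \mu_1$, where $\alpha$ denotes the size of the part retaining $N$. A short fibre count for the maps $\tau \mapsto \tau(j~N)$ then shows that each merged target is attained exactly once, whereas each split target of type $(\alpha,\beta)$ is attained exactly $\beta$ times, one for each choice of $j$ in the $\beta$-part.

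To convert this fixed-target identity into a statement about $\H_{g,n}(\mmu)$, the crucial input is the centrality established in the proof of Lemma~\ref{lem:cycletype}: the count $F_m^\bullet(\tau)$ equals the coefficient of $\tau$ in the central group-algebra element $h_m(J_2, \ldots, J_N)$, and hence depends only on the cycle type of $\tau$. Writing the number of labelled permutations of cycle type $\mmu$ as $N!/\prod_i \mu_i$, and noting that $N$ lies in a prescribed cycle of length $\lambda$ in a fraction $\lambda/N$ of them, each summation over targets becomes a multiple of the appropriate disconnected Hurwitz number, with the constraint $m = 2g-2+n+N$ controlling the genus. The symmetry factor $\mu_1/N$ on the distinguished side produces the coefficient $\mu_1$ on the left, the join factor $(\mu_1+\mu_i)/N$ produces $(\mu_1+\mu_i)$, and the cut fibre weight $\beta$ together with the factor $\alpha/N$ produces $\alpha\beta$. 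Finally, passing from disconnected to connected numbers by the usual inclusion over set partitions splits the disconnected cut term into the non-separating contribution $\H_{g-1,n+1}(\mmu_{S\setminus\{1\}}, \alpha, \beta)$ and the separating contribution $\sum_{g_1+g_2=g,\,I\sqcup J = S\setminus\{1\}} \H_{g_1,|I|+1}(\mmu_I,\alpha)\,\H_{g_2,|J|+1}(\mmu_J,\beta)$, exactly as in the simple and orbifold cases; an Euler characteristic check confirms the index $g_1+g_2 = g$.

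I expect the main obstacle to be the bookkeeping in the third step rather than any single hard idea. The delicate point is that one cannot invoke symmetry of the ground set for the monotone factorisations themselves, since relabelling does not preserve monotonicity; every passage from a fixed target to a Hurwitz number must therefore route through the cycle-type invariance coming from centrality, and keeping the combinatorial weights ($\mu_1$, the fibre counts, and the $\lambda/N$ symmetry factors) correctly aligned is where errors are most likely to arise. A secondary subtlety is isolating the separating term cleanly in the disconnected-to-connected passage, ensuring that the genus and the partition $I \sqcup J$ of the remaining parts are distributed correctly between the two factors.
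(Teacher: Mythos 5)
Your proposal is correct, but note that the paper does not actually prove this statement: it is quoted in Section~\ref{sec:hurwitz} as a known result of Goulden, Guay-Paquet and Novak, and the only argument of this kind carried out in the paper is the proof of the more general orbifold recursion of Theorem~\ref{thm:cutjoin} in Section~\ref{sec:cutjoin}, so that is the relevant comparison. You exploit the fact that for $a=1$ transitivity forces the last transposition to be $(r_m ~ N)$ with $N=|\mmu|$ the largest letter, delete it to obtain the fixed-target recursion $F_m^\bullet(\tau)=\sum_{j<N} F_{m-1}^\bullet(\tau(j~N))$, and then use centrality of $h_m(J_2,\ldots,J_N)$ (the same mechanism as Lemma~\ref{lem:cycletype}) together with the $\lambda/N$ symmetry factors to convert to Hurwitz numbers; this is essentially the original Goulden--Guay-Paquet--Novak derivation and is the cleanest route in the $a=1$ case. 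The paper instead encodes each factorisation as a monotone monodromy graph and, crucially, refines the enumeration by a counter $\ell$ recording the congruence class modulo $a$ of the largest element of $\sigma_m$; that refinement is unavoidable for $a>1$, where $\sigma_0$ already moves every letter and nothing forces $s_m=N$, but it collapses when $a=1$ (every counter equals $1$ and the Heaviside factor is identically $1$), which is exactly what your argument sidesteps. One point you should state more carefully: the claim that each split target of type $(\alpha,\beta)$ is ``attained exactly $\beta$ times'' is false for a single fixed $\tau$ (the target determines $j$ uniquely), and is correct only as a fibre count for the map $(\tau,j)\mapsto\tau(j~N)$ with $\tau$ ranging over all permutations of labelled type $\mmu$ having $N$ in cycle $1$; your subsequent accounting, which multiplies this $\beta$ by the symmetry factor $\alpha/N$ on the target side, shows you intend that reading, and with it the coefficients $\mu_1$, $\mu_1+\mu_i$ and $\alpha\beta$ all come out correctly.
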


\subsection*{Polynomiality and the ELSV formula}

It was observed by Goulden, Jackson and Vainshtein~\cite{gou-jac-vai} that for $(g,n) \neq (0,1)$ or $(0,2)$, there exists a symmetric polynomial $P_{g,n}$ of degree $3g-3+n$ such that
\[
H_{g,n}(\mu_1, \ldots, \mu_n) = m! \prod_{i=1}^n \frac{\mu_i^{\mu_i}}{\mu_i!} P_{g,n}(\mu_1, \ldots, \mu_n).
\]
Although inherently a combinatorial statement, the polynomiality of simple Hurwitz numbers was first proved as a consequence of the following algebro-geometric result.

\begin{theorem}[ELSV formula~\cite{eke-lan-sha-vai}]
The simple Hurwitz numbers satisfy the following equation, where $m = 2g-2+n+|\mmu|$.
\[
H_{g,n}(\mu_1, \ldots, \mu_n) = m! \, \prod_{i=1}^n \frac{\mu_i^{\mu_i}}{\mu_i!} \sum_{|\mathbf{d}| + k = 3g-3+n} (-1)^k \left[ \int_{\overline{\mathcal M}_{g,n}} \psi_1^{d_1} \cdots \psi_n^{d_n} \lambda_k \right] \mu_1^{d_1} \cdots \mu_n^{d_n}
\]
\end{theorem}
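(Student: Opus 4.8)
The plan is to prove the ELSV formula by realising the simple Hurwitz number as a relative Gromov--Witten invariant of $\mathbb{CP}^1$ and then applying virtual localisation. First I would reinterpret $H_{g,n}(\mu_1,\ldots,\mu_n)$ geometrically: by the Riemann existence theorem together with the deformation invariance of relative Gromov--Witten invariants, it equals $m!$ times the degree of the branch morphism from the moduli space $\overline{\mathcal{M}}_{g,n}(\mathbb{CP}^1,\mmu)$ of degree-$|\mmu|$ genus-$g$ stable maps to $\mathbb{CP}^1$, taken relative to $\infty$ with ramification profile $\mmu$, down to the configuration space of the $m = 2g-2+n+|\mmu|$ additional simple branch points. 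Fixing these branch points generically turns the count into an integral against the virtual fundamental class of this moduli space. The factor $m!$ records the ordering of the simple branch points, mirroring the ordering of $\sigma_1, \ldots, \sigma_m$ in the factorisation definition, while the automorphism weighting of covers matches the factor $\frac{1}{|\mmu|!}$.

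Next I would introduce the standard $\mathbb{C}^*$-action on $\mathbb{CP}^1$ fixing $0$ and $\infty$, which lifts to the moduli space, and apply the virtual localisation formula of Graber and Pandharipande. The $\mathbb{C}^*$-fixed loci are indexed by decorated bipartite graphs whose vertices map to $0$ or $\infty$ and whose edges are rational components mapping as $z \mapsto z^{d_e}$. The dimension constraint imposed by the relative condition over $\infty$ forces the contributions to concentrate on the locus in which a single contracted genus-$g$, $n$-pointed component lies over $0$ and is joined by $n$ rational tails --- the $i$-th being a degree-$\mu_i$ cover totally ramified over $0$ and $\infty$ --- to the relative divisor. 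This main fixed locus is a finite quotient of $\overline{\mathcal{M}}_{g,n}$, with its $n$ marked points recording the nodes where the tails attach.

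I would then compute the contribution of this locus. Three ingredients enter the equivariant Euler class of the virtual normal bundle: the Hodge bundle on the central component contributes $\Lambda_g^\vee(t) = \sum_{k=0}^g (-1)^k \lambda_k\, t^{g-k}$ through the moving part of the deformation--obstruction complex; smoothing the node where the $i$-th tail meets the centre contributes $\bigl(\tfrac{t}{\mu_i} - \psi_i\bigr)^{-1}$, since the tail has tangent weight $t/\mu_i$ there; and each tail map, with its automorphism group $\mathbb{Z}/\mu_i\mathbb{Z}$ together with the edge and vertex weights, contributes the combinatorial factor $\tfrac{\mu_i^{\mu_i}}{\mu_i!}$. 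Assembling these and using that the Hurwitz number is a pure number --- so that the net power of the equivariant parameter $t$ must vanish by the dimension equality $\dim \overline{\mathcal{M}}_{g,n} = 3g-3+n$ --- lets me set $t=1$ and extract
\[
H_{g,n}(\mmu) = m!\,\prod_{i=1}^n \frac{\mu_i^{\mu_i}}{\mu_i!} \int_{\overline{\mathcal{M}}_{g,n}} \frac{\sum_{k=0}^g (-1)^k \lambda_k}{\prod_{i=1}^n (1 - \mu_i \psi_i)}.
\]
Expanding each factor $\tfrac{1}{1-\mu_i\psi_i} = \sum_{d_i \geq 0} \mu_i^{d_i}\psi_i^{d_i}$ and retaining only the terms of total degree $3g-3+n$, which by dimension are the only ones surviving the integral, reproduces precisely the stated sum over $|\mathbf{d}| + k = 3g-3+n$.

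I expect the main obstacle to be the obstruction-bundle analysis that produces the Hodge class $\Lambda_g^\vee$: this requires carefully identifying the moving part of $H^1$ in the normalisation sequence of the localised deformation theory and matching it with the dual Hodge bundle, which is where the alternating $\lambda_k$ originate. A secondary difficulty is controlling the ``rubber'' integrals coming from the relative divisor over $\infty$ and verifying that graphs distributing positive genus or marked points away from the central vertex contribute nothing to the final answer. Finally, tracking the automorphism, edge, and node factors precisely enough to land on the exact constants $\tfrac{\mu_i^{\mu_i}}{\mu_i!}$ and $m!$ demands meticulous bookkeeping. An alternative route, closer to the original argument of Ekedahl--Lando--Shapiro--Vainshtein, instead compactifies the Hurwitz space and studies the Lyashko--Looijenga map directly, but it faces analogous difficulties in extracting the Hodge integrals.
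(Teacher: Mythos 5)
The paper does not prove this statement: the ELSV formula appears in Section~\ref{sec:hurwitz} purely as cited background, with the surrounding text noting that the original proof of Ekedahl--Lando--Shapiro--Vainshtein computed the degree of the Lyashko--Looijenga map and that the formula was subsequently rederived by localisation on a moduli space of stable maps. Your proposal is an outline of that second, localisation-based route (in the style of Graber--Vakil and Okounkov--Pandharipande), and as an outline it is accurate: the principal fixed locus with a contracted genus-$g$ component over $0$ and $n$ totally ramified rational tails, the dual Hodge class $\Lambda_g^\vee(t)$ from the obstruction bundle, the node-smoothing factors $(t/\mu_i - \psi_i)^{-1}$, and the edge factors $\mu_i^{\mu_i}/\mu_i!$ are exactly the ingredients that assemble into $m!\,\prod_i \frac{\mu_i^{\mu_i}}{\mu_i!}\int_{\overline{\mathcal M}_{g,n}} \frac{\sum_k(-1)^k\lambda_k}{\prod_i(1-\mu_i\psi_i)}$, which expands to the stated sum over $|\mathbf{d}|+k=3g-3+n$. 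Two points in your sketch are glossed more than they should be. First, the claim that the relative condition over $\infty$ ``forces'' all contributions onto the principal graph by a dimension constraint is not how the argument actually closes: one either chooses a particular equivariant lift of the point/branch-divisor classes so that all other localisation graphs contribute zero, or one invokes the rubber calculus over the relative divisor; you correctly flag this as a difficulty but it is the crux rather than a loose end. Second, the placement of the $m!$ depends on whether the branch morphism is taken to $\mathrm{Sym}^m\mathbb{CP}^1$ or to an ordered configuration space, and with the paper's convention (ordered tuples $(\sigma_1,\ldots,\sigma_m)$, branch points fixed and distinct) the Hurwitz number already equals the degree of the branch map to the symmetric product, so the $m!$ must be tracked through whichever normalisation of the virtual class you adopt rather than asserted. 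As a plan this is sound and standard; as written it is a roadmap whose hardest segments (vanishing of the non-principal graphs and the identification of the moving part of the obstruction theory with $\mathbb{E}^\vee$) remain to be executed. Your closing remark correctly identifies the original Lyashko--Looijenga argument as the genuinely different alternative.
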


Here, $\psi_1, \ldots, \psi_n \in H^2(\overline{\mathcal M}_{g,n}; \mathbb{Q})$ and $\lambda_k \in H^{2k}(\overline{\mathcal M}_{g,n}; \mathbb{Q})$ are the psi-classes and Hodge classes on the Deligne--Mumford moduli space of curves $\overline{\mathcal M}_{g,n}$. For more information on the geometry of $\overline{\mathcal M}_{g,n}$, see~\cite{har-mor}.

The initial proof of Ekedahl, Lando, Shapiro, and Vainshtein computed the degree of the Lyashko--Looijenga map. Subsequently, the ELSV formula was deduced using localisation on the moduli space of stable maps. This latter proof was generalised to give the following result for orbifold Hurwitz numbers.

\begin{theorem}[Orbifold ELSV formula~\cite{joh-pan-tse}]
The orbifold Hurwitz numbers satisfy the following equation, where $m = 2g-2+n+\frac{\mmu}{a}$.
\[
H_{g,n}^{[a]}(\mu_1, \ldots, \mu_n) = m! \, a^m \prod_{i=1}^n \frac{(\mu_i/a)^{\lfloor \mu_i/a \rfloor}}{\lfloor \mu_i/a \rfloor!} \sum_{|\mathbf{d}| + k = 3g-3+n} \frac{(-1)^k}{a^{|\mathbf{d}|}} \left[ \int_{\overline{\mathcal M}_{g,[-\mmu]}(\mathcal{B}\mathbb{Z}_a)} \overline{\psi}_1^{d_1} \cdots \overline{\psi}_n^{d_n} \lambda_k^U \right] \mu_1^{d_1} \cdots \mu_n^{d_n}
\]
\end{theorem}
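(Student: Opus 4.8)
The plan is to realise the $a$-orbifold Hurwitz numbers as relative orbifold Gromov--Witten invariants and then extract the formula by virtual torus localisation, generalising the localisation proof of the classical ELSV formula alluded to above. First I would invoke the orbifold Gromov--Witten/Hurwitz correspondence to identify $H_{g,n}^{[a]}(\mu_1, \ldots, \mu_n)$ with a relative Gromov--Witten invariant of the orbifold $\mathbb{CP}^1[a]$ --- the projective line carrying a $\mathbb{Z}_a$ stacky structure at $0$ --- taken relative to $\infty$, where the relative ramification profile is prescribed to be $\mmu$ and the interior branching consists of $m$ simple branch points. The monodromy of the cover around the orbifold point forces the $i$th marked point to carry the twisted sector $[-\mu_i] \in \mathbb{Z}_a$, which is the origin of the subscript $[-\mmu]$ on the moduli space in the statement. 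Equivalently, one may phrase the whole argument on a moduli space of twisted stable maps or of admissible $\mathbb{Z}_a$-covers; the localisation analysis is insensitive to this choice.

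Next I would equip $\mathbb{CP}^1[a]$ with the standard $\mathbb{C}^*$-action fixing $0$ and $\infty$, lift it to the moduli space of relative stable maps, and apply the virtual localisation theorem of Graber--Pandharipande in its orbifold incarnation. The connected components of the fixed locus are indexed by decorated bipartite graphs whose vertices lie over $0$ or over $\infty$ and whose edges correspond to multiple covers of $\mathbb{CP}^1$ totally ramified over both poles. The contracted components over the orbifold point contribute the moduli space $\overline{\mathcal M}_{g,[-\mmu]}(\mathcal{B}\mathbb{Z}_a)$ of stable maps to the classifying stack, and the restriction of the virtual class there produces the orbifold psi-classes $\overline{\psi}_i$ together with the Hurwitz--Hodge class $\lambda_k^U$, which arises from the moving part of the $\mathbb{Z}_a$-twisted Hodge bundle. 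Isolating this obstruction contribution and matching it with $\lambda_k^U$ is the geometric heart of the argument.

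I would then evaluate the three ingredients of each graph contribution separately: the vertex contribution over $\infty$, the edge contributions, and the vertex contribution over the orbifold point. The edge contributions are multiple-cover Hodge integrals whose evaluation, via the string and dilaton equations together with the standard one-leg computation, yields the combinatorial prefactor $\prod_{i=1}^n \frac{(\mu_i/a)^{\lfloor \mu_i/a \rfloor}}{\lfloor \mu_i/a \rfloor!}$ and the accompanying powers of $a$; the local picture over $0$ is what produces the floor functions $\lfloor \mu_i/a \rfloor$ and the dependence on $\mu_i \bmod a$ encoded in the twisted sector. After collecting equivariant weights, cancelling the $\infty$-vertex contributions against the edge denominators and resumming over all graphs, only the integral over $\overline{\mathcal M}_{g,[-\mmu]}(\mathcal{B}\mathbb{Z}_a)$ should survive, reproducing the right-hand side with the sum over $|\mathbf{d}| + k = 3g-3+n$.

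The hard part will be the precise evaluation and resummation of this localisation graph sum rather than the formal localisation setup. Specifically, identifying the moving part of the obstruction bundle over the $0$-vertices with the class $\lambda_k^U$, controlling the unstable low-genus vertices and their automorphisms, and verifying that the tower of edge and $\infty$-vertex contributions collapses to the closed product formula all demand delicate equivariant bookkeeping. This is where the genuine content of the theorem resides, and it is also the step most sensitive to the orbifold structure, since the twisted sectors and the fractional Chern character of the Hodge bundle enter exactly the multiple-cover integrals one must compute.
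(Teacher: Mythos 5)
This theorem is not proved in the paper at all: it is quoted as background from Johnson--Pandharipande--Tseng~\cite{joh-pan-tse}, and the surrounding text only remarks that the localisation proof of the classical ELSV formula ``was generalised to give the following result.'' So there is no in-paper argument to compare yours against. That said, your outline is faithful to the strategy of the cited reference: realising $H_{g,n}^{[a]}$ as a relative invariant of $\mathbb{CP}^1$ with a $\mathbb{Z}_a$ stacky point at $0$, applying virtual localisation, and reading off $\overline{\mathcal M}_{g,[-\mmu]}(\mathcal{B}\mathbb{Z}_a)$, the classes $\overline{\psi}_i$, and the Hurwitz--Hodge class $\lambda_k^U$ from the fixed-locus contributions is exactly the right skeleton, including the correct identification of the twisted sectors $[-\mu_i]$ from the monodromy at the orbifold point.

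However, as a proof it is only a roadmap. Every step that carries actual content is named and then deferred: the evaluation of the edge contributions that produces $\prod_i \frac{(\mu_i/a)^{\lfloor \mu_i/a\rfloor}}{\lfloor \mu_i/a\rfloor !}$ and the powers of $a$, the identification of the moving part of the obstruction bundle with $\lambda_k^U$, the treatment of unstable vertices, and the resummation over localisation graphs that collapses everything to a single integral over $\overline{\mathcal M}_{g,[-\mmu]}(\mathcal{B}\mathbb{Z}_a)$. You acknowledge this yourself in the final paragraph. Since the theorem \emph{is} those computations, the proposal should be judged as a correct plan rather than a proof; to complete it you would need to carry out the equivariant weight calculus explicitly (or cite it), in particular the orbifold analogue of the one-leg multiple-cover integral and the vanishing/cancellation arguments that eliminate the $\infty$-side contributions.
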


The integral here is performed over the moduli space of stable maps to the classifying stack of $\mathbb{Z}_a$, or equivalently, the moduli space of admissible covers. It depends on the tuple $(\mu_1, \ldots, \mu_n)$ modulo $a$, so it follows that the sum on the right hand side is a symmetric quasi-polynomial modulo $a$ of degree $3g-3+n$.

A similar polynomial structure has also been proven for monotone Hurwitz numbers.

\begin{theorem}[Polynomiality for monotone Hurwitz numbers~\cite{gou-gua-nov13b}] \label{thm:mpoly}
For $(g,n) \neq (0,1)$ or $(0,2)$, there exists a symmetric polynomial $\vec{P}_{g,n}$ of degree $3g-3+n$ such that
\[
\H_{g,n}(\mu_1, \ldots, \mu_n) = \prod_{i=1}^n \binom{2\mu_i}{\mu_i} \vec{P}_{g,n}(\mu_1, \ldots, \mu_n).
\]
\end{theorem}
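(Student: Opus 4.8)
The plan is to bypass the cut-and-join recursion entirely and instead read off polynomiality from an exact character formula, using the Jucys--Murphy description already established in the proof of Lemma~\ref{lem:cycletype}. Write $N = |\mmu|$ and, as in the simple monotone case $\sigma_0 = \mathrm{id}$, set $m = 2g-2+n+N$. The number of (possibly disconnected) monotone factorisations with product of cycle type $\mmu$ is the coefficient of the identity in $h_m(J_2, \ldots, J_N)\,C_\mmu$. Since both factors are central, they act on the irreducible module $V^\lambda$ by the scalars $h_m(\mathrm{cont}(\lambda))$ and $|C_\mmu|\,\chi^\lambda(\mmu)/\dim V^\lambda$, where $\mathrm{cont}(\lambda) = \{\,j-i : (i,j) \in \lambda\,\}$ is the multiset of contents and $\chi^\lambda(\mmu)$ is the character value on the class of cycle type $\mmu$. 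Extracting the coefficient of the identity through the central idempotents then yields a Plancherel-type average of the form
\[
\H_{g,n}^{\bullet}(\mmu) = \frac{1}{z_\mmu} \sum_{\lambda \vdash N} \frac{(\dim V^\lambda)^2}{N!} \, \frac{\chi^\lambda(\mmu)}{\dim V^\lambda} \, h_m(\mathrm{cont}(\lambda)),
\]
where $z_\mmu$ is the standard symmetry factor; the connected numbers $\H_{g,n}(\mmu)$ are recovered by the usual inclusion--exclusion over set partitions, which preserves any polynomial structure.

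The next step is to recognise the two $\lambda$-dependent factors as elements of the algebra $\Lambda^{*}$ of shifted symmetric functions. The normalised central character $\lambda \mapsto \chi^\lambda(\mmu)/\dim V^\lambda$ is shifted symmetric and, crucially, stabilises as $N$ varies, while the content power sums $p^{*}_k(\lambda) = \sum_{(i,j) \in \lambda}(j-i)^k$ generate a polynomial subalgebra into which $h_m(\mathrm{cont}(\lambda))$ expands via Newton's identities. Passing to the projective limit $\Lambda^{*}$ removes the apparent dependence on the varying symmetric group $S_N$ and turns the Plancherel average into a scalar product that can be evaluated term by term. I would then expand the central character in the basis dual to the completed power sums, so that $\H_{g,n}(\mmu)$ becomes a finite sum indexed by the ways of distributing the marked parts $\mu_1, \ldots, \mu_n$ among content power-sum factors, each contribution carrying an explicit monomial in the $\mu_i$.

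Polynomiality, the sharp degree $3g-3+n$, and the central binomial normalisation would all be read off from this expansion. The Euler characteristic enters through $m = 2g-2+n+N$: the genus filtration on $\Lambda^{*}$ bounds the number of content power-sum factors that can survive, and this is exactly what caps the degree of the resulting polynomial in $\mu_1, \ldots, \mu_n$ at $3g-3+n$. The prefactor $\prod_{i=1}^{n}\binom{2\mu_i}{\mu_i}$ is produced by the single-part specialisation: summing the product of a one-cycle central character and the content generating series over $\lambda$ yields, for each marked part, a central binomial coefficient times a polynomial correction. I would pin down this normalisation first on the explicit stable base cases $(g,n) = (0,3)$ and $(1,1)$, and then propagate it through the expansion.

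The main obstacle is to establish the three claims --- uniform polynomiality in $\mmu$, the exact degree $3g-3+n$, and the precise binomial prefactor --- simultaneously and uniformly in $N = |\mmu|$. The genuine difficulty is that the symmetric group is not fixed, so the argument must take place in $\Lambda^{*}$, and one must show that all residual $N$-dependence collapses into the central binomial factors. It is worth emphasising that the cut-and-join recursion of the previous subsection is \emph{not} a viable tool here: under the join operation $(\mu_1, \mu_i) \mapsto \mu_1 + \mu_i$ the ratio $\binom{2(\mu_1+\mu_i)}{\mu_1+\mu_i}\big/\big[\binom{2\mu_1}{\mu_1}\binom{2\mu_i}{\mu_i}\big]$ fails to be a polynomial, so the binomial normalisation is incompatible with the recursion, and the character-theoretic route is the natural one.
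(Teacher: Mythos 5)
This statement is quoted in the paper as a known result of Goulden, Guay-Paquet and Novak; the paper itself contains no proof, so your attempt can only be measured against the cited source and against its own internal completeness. On the first count, your closing paragraph is an error of strategy assessment: the actual proof in the cited reference \emph{does} go through the monotone cut-and-join recursion. The point you miss is that polynomiality is not propagated coefficient-by-coefficient through the join term; one proves by induction on $2g-2+n$ that the generating function $F_{g,n}(x_1,\ldots,x_n)$ of equation~\eqref{eq:freeenergies} is a polynomial (with controlled pole structure) in the algebraic functions $\eta_i=(1-4x_i)^{-1/2}$, and the join operation is perfectly compatible with that ansatz. The prefactor $\prod_i\binom{2\mu_i}{\mu_i}$ then falls out of coefficient extraction via $\sum_{\mu\geq 1}\binom{2\mu}{\mu}x^\mu=\eta-1$ and its derivatives, so the non-polynomiality of the ratio $\binom{2(\mu_1+\mu_i)}{\mu_1+\mu_i}\big/\binom{2\mu_1}{\mu_1}\binom{2\mu_i}{\mu_i}$ is not an obstruction.

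On the second count, your character-theoretic route is a legitimate alternative that does exist in the literature (via the semi-infinite wedge formalism), but as written it leaves precisely the hard steps as assertions. The Plancherel-type formula with $h_m(\mathrm{cont}(\lambda))$ is essentially correct (modulo the labelling normalisation: with the cycle-labelling convention of this paper the symmetry factor is $\prod_i\mu_i$ rather than $z_{\mmu}$), and shifted symmetric functions are the right framework. However, the claim that ``the genus filtration on $\Lambda^{*}$ bounds the number of content power-sum factors that can survive'' cannot be taken for granted: $h_m$ has degree $m=2g-2+n+N$, which grows with $N=|\mmu|$, so its expansion in the $p_k^{*}$ does not truncate, and the uniform-in-$N$ stabilisation and vanishing statement you need (that after taking the connected part only boundedly many terms contribute, with the excess collapsing into the binomial factors) is essentially equivalent to the theorem itself. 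Similarly, the emergence of $\prod_i\binom{2\mu_i}{\mu_i}$ is asserted from a ``single-part specialisation'' plus a check of the base cases $(0,3)$ and $(1,1)$, but checking base cases propagates nothing without a structural induction, which you have explicitly declined to supply. As it stands, the proposal is a plausible research plan rather than a proof.
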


Goulden, Guay-Paquet and Novak asked for a geometric interpretation of monotone Hurwitz numbers analogous to an ELSV formula. We remark here that the relation between monotone Hurwitz numbers and the topological recursion~\cite{do-dye-mat} can be combined with a theorem of Eynard that relates the output of the topological recursion to intersection numbers on moduli spaces of curves~\cite{eyn11}. This produces the formula
\[ 
\H_{g,n} (\mu_1, \ldots, \mu_n)  =  \int_{\overline{\mathcal M}_{g,n}} \exp \Big( - \sum_m s_m \, \kappa_m \Big)\sum_{d_1,\ldots, d_n}  \prod_{i=1}^n \binom{2\mu_i}{\mu_i} \, \frac{(2d_i + 2\mu_i -1)!!}{(2\mu_i - 1)!!} \, \psi_i^{d_i}, 
\]
where $\kappa_m \in H^{2m}(\overline{\mathcal M}_{g,n}; \mathbb{Q})$ denote the Mumford--Morita--Miller classes and the rational numbers $s_m$ are defined via the expansion
\[
\exp \Big( \sum_{m=1}^n s_m \, \hbar^m \Big) = \sum_{m=0}^n (2m + 1)!! \, \hbar^m + O(\hbar^{n+1}). 
\]
This analogue of the ELSV formula for monotone Hurwitz numbers was independently obtained by Alexandrov, Lewanski and Shadrin~\cite{ale-lew-sha}. However, there is no known proof of this formula that does not rely on the topological recursion and the results of Eynard.

\subsection*{Topological recursion}

The topological recursion of Chekhov, Eynard and Orantin~\cite{che-eyn,eyn-ora07} takes as input a spectral curve and outputs multidifferentials $\omega_{g,n}$ for $g \geq 0$ and $n \geq 1$. For brevity and simplicity, we formulate here the topological recursion for the case of a rational spectral curve. The definition for higher genus spectral curves and generalisations to other types of spectral curves can be found elsewhere in the literature~\cite{eyn-ora07}.

\begin{itemize}
\item {\bf Input.} A \emph{rational spectral curve} consists of two meromorphic functions $x, y: \mathbb{CP}^1 \to \mathbb{CP}^1$ with the condition that the zeros of $\dd x$ are simple and distinct from the zeros of $\dd y$.
\item {\bf Base cases.} The base cases are defined by the equations
\[
\omega_{0,1}(z_1) = -y(z_1) \, \dd x(z_1) \qquad \text{and} \qquad \omega_{0,2}(z_1, z_2) = \frac{\dd z_1 \, \dd z_2}{(z_1-z_2)^2}.
\]
\item {\bf Recursion.} Recursively define the multidifferentials $\omega_{g,n}$ by the equation
\[
\omega_{g,n}(\zz_S) = \sum_{\alpha} \mathop{\text{Res}}_{z=\alpha} K(z_1, z) \Bigg[ \omega_{g-1,n+1}(z, \overline{z}, \zz_{S \setminus \{1\}}) + \mathop{\sum_{g_1+g_2=g}}_{I \sqcup J = S \setminus \{1\}}^\circ \omega_{g_1, |I|+1}(z, \zz_I) \, \omega_{g_2, |J|+1}(\z, \zz_J) \Bigg],
\]
where $S = \{1, 2, \ldots, n\}$ and $\mmu_I = \{\mu_{i_1}, \ldots, \mu_{i_k}\}$ for $I = \{i_1, \ldots, i_k\}$. The outer summation is over the zeros $\alpha$ of $\dd x$. The notation $\overline{z}$ refers to the local Galois conjugate of $z$ with respect to the function $x$. In other words, $\overline{z}$ is the non-identity meromorphic function defined locally at $\alpha$ by the equation $x(z) = x(\overline{z})$. The $\circ$ over the inner summation means that we exclude terms that involve $\omega_{0,1}$. Finally, the kernel $K(z_1, z)$ is defined by the equation
\[
K(z_1, z) = - \frac{\int_o^z \omega_{0,2}(z_1, \,\cdot\,)}{[y(z) - y(\overline{z})] \, \dd x(z)}.
\]
\end{itemize}

The topological recursion has found application to many problems from enumerative geometry and mathematical physics. In particular, Hurwitz numbers of many flavours are either known or conjectured to be governed by the topological recursion. Such results can have profound geometric consequences. For example, a proof of the conjectured relation between spin Hurwitz numbers and topological recursion would lead to a proof of the spin ELSV conjecture of Zvonkine~\cite{zvo,sha-spi-zvo}.

\begin{theorem}[Topological recursion for Hurwitz numbers~\cite{eyn-mul-saf, do-lei-nor, bou-her-liu-mul, do-dye-mat}] \label{thm:toprechur}
The following table shows the rational spectral curves that govern simple Hurwitz numbers, orbifold Hurwitz numbers, and monotone Hurwitz numbers.\footnote{In fact, the simple and orbifold Hurwitz numbers require a slightly modified version of the topological recursion, in which $x$ is referred to as a $\mathbb{C}^*$-coordinate. In such cases, rather than requiring $x(z)$ to be meromorphic --- or equivalently $\dd x(z)$ to be meromorphic --- one requires $\dd \log x(z)$ to be meromorphic. See the relevant papers for details. \label{foot:logarithmic}} In all cases, the expansions of the correlation differentials at $x_1 = \cdots = x_n = 0$ satisfy the equations on the right for $(g,n) \neq (0,2)$. Here, we have used the notation $x_i = x(z_i)$, for $i = 1, 2, \ldots, n$.
\begin{align*}
& \text{simple} & x(z) &= z \exp (-z) && y(z) = z & \omega_{g,n} &= \sum_{\mu_1, \ldots, \mu_n = 1}^\infty \frac{H_{g,n}(\mu_1, \ldots, \mu_n)}{(2g-2+n+|\mmu|)!} \prod_{i=1}^n \mu_i x_i^{\mu_i-1} \, \dd x_i \\
& \text{orbifold} & x(z) &= z \exp(-z^a) && y(z) = z^a & \omega_{g,n} &= \sum_{\mu_1, \ldots, \mu_n = 1}^\infty \frac{H_{g,n}^{[a]}(\mu_1, \ldots, \mu_n)}{(2g-2+n+\frac{|\mmu|}{a})!} \prod_{i=1}^n \mu_i x_i^{\mu_i-1} \, \dd x_i \\
& \text{monotone} & x(z) &= \frac{z-1}{z^2} && y(z) = -z & \omega_{g,n} &= \sum_{\mu_1, \ldots, \mu_n = 1}^\infty \H_{g,n}(\mu_1, \ldots, \mu_n) \prod_{i=1}^n \mu_i x_i^{\mu_i-1} \, \dd x_i
\end{align*}
\end{theorem}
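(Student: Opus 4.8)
The plan is to prove each row of the table by showing that the correlation differentials built from the generating functions on the right satisfy the defining equations of the topological recursion, and then to invoke the uniqueness of its output. Since $\omega_{0,1}$ and $\omega_{0,2}$ together with the residue recursion determine all higher $\omega_{g,n}$, it suffices to (i) check that the proposed $(0,1)$ and $(0,2)$ differentials agree with $-y\,\dd x$ and the standard Bergman kernel $\frac{\dd z_1\,\dd z_2}{(z_1-z_2)^2}$, and (ii) verify that the proposed $\omega_{g,n}$ for $(g,n)\neq(0,1),(0,2)$ satisfy the residue recursion.

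For step (i), I would compute the genus-zero one-point generating function directly from the relevant cut-and-join recursion together with its base case, and then show that $\sum_{\mu} H_{0,1}(\mu)\,x^{\mu-1}\,\dd x$ coincides with $-y(z)\,\dd x(z)$ under the stated parametrisation; concretely this amounts to inverting $x(z)$ as a power series about the relevant point (for instance $z=0$ when $x(z)=ze^{-z}$, or $z=1$ when $x(z)=\frac{z-1}{z^2}$) and matching coefficients. The two-point case is the standard verification that $\sum H_{0,2}(\mu_1,\mu_2)\,\mu_1\mu_2\,x_1^{\mu_1-1}x_2^{\mu_2-1}\,\dd x_1\,\dd x_2$, once the universal $(0,2)$ contribution carried by $\omega_{0,2}$ is accounted for, exhibits exactly the prescribed double pole on the diagonal and no other singularities.

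The heart of the argument is step (ii), and here the strategy is the Laplace-transform method. I would rewrite each cut-and-join recursion---the orbifold recursion for the simple and orbifold rows, and the monotone recursion for the last row---as a relation among the free energies $\vec{F}_{g,n}$, and then pass to the spectral-curve coordinate $z$ via the change of variables $x=x(z)$. Under this transform the two combinatorial operations in the cut-and-join, namely merging a pair of arguments $\mu_i,\mu_j\mapsto\mu_i+\mu_j$ and splitting $\mu_i\mapsto\alpha+\beta$, become precisely the multiplication and convolution operations encoded in the recursion kernel $K(z_1,z)$. The key identity to establish is that the local Galois involution $z\mapsto\z$ at each zero of $\dd x$ implements the symmetrisation $\alpha\leftrightarrow\beta$ of the splitting term, so that collecting residues at the ramification points reproduces term by term the right-hand side of the cut-and-join recursion. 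One then closes the argument by induction on $2g-2+n$.

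The main obstacle I expect is the analyticity step. The generating functions are a priori defined only as formal power series in the $x_i$ near the origin, whereas the residue recursion presupposes that they extend to global rational multidifferentials on $\mathbb{CP}^1$ whose only poles sit at the zeros of $\dd x$ and which carry no residues there; without this pole-structure control the recursion is not even well-posed. I would secure it within the same induction: assuming the statement for all lower Euler characteristic, the recursion exhibits $\omega_{g,n}$ as a finite sum of residues of rational objects, which forces the required analytic structure, while the ELSV-type formulae and the polynomiality result of Theorem~\ref{thm:mpoly} supply the degree bounds needed to control the base cases and to rule out spurious poles at $z=\infty$. Finally, the subtlety flagged in the footnote---that the simple and orbifold rows are governed by $\dd\log x$ rather than $\dd x$, reflecting the extra $\frac{1}{m!}$ normalisation absent in the monotone row---requires replacing $\dd x$ by the logarithmic kernel in those two cases, but leaves the overall structure of the proof unchanged.
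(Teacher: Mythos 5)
The paper offers no proof of Theorem~\ref{thm:toprechur}: it is a survey statement quoted with citations to \cite{eyn-mul-saf}, \cite{do-lei-nor}, \cite{bou-her-liu-mul} and \cite{do-dye-mat}, so there is no internal argument to compare yours against. Measured against those sources, your outline is the standard strategy --- verify the unstable cases, Laplace-transform the cut-and-join recursion into the residue recursion with the Galois involution implementing the splitting symmetrisation, and induct on $2g-2+n$ --- and it is the right strategy.

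The one place where your logic is genuinely shaky is the analyticity step. You propose to ``secure it within the same induction,'' arguing that the recursion exhibits $\omega_{g,n}$ as a finite sum of residues of rational objects. That establishes rationality of the \emph{topological recursion output}, but it does not establish that the \emph{Hurwitz generating series} --- a priori only a formal power series in the $x_i$ --- is the expansion of a rational multidifferential with poles confined to the ramification points. Without that, you cannot apply the residue calculus to the Hurwitz side to show it satisfies the same recursion, so the identification of the two objects never gets off the ground. In the cited proofs this rationality is an unconditional input for every $(g,n)$, supplied by the ELSV formula, its orbifold analogue, and the Goulden--Guay-Paquet--Novak polynomiality of Theorem~\ref{thm:mpoly} (note the structural prefactors $\mu^\mu/\mu!$ and $\binom{2\mu}{\mu}$ are exactly what make the series an expansion of a function of $z$ under $x=x(z)$); it is not a corollary of the induction, and establishing it is a substantial part of those papers. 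Two smaller points: the join term $\mu_i+\mu_j$ of the cut-and-join does not map onto the recursion kernel directly but is absorbed by the $\omega_{0,2}$ contributions in the $\sum^\circ$ term, which deserves an explicit computation; and the $\frac{1}{m!}$ normalisation in the simple and orbifold rows is forced by the combinatorics (the ELSV prefactor), not by the $\dd\log x$ convention of the footnote --- those are independent issues.
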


\subsection*{Quantum curve}

Spectral curves appear in various guises across mathematics and physics. It is often the case that they can be quantised to produce a differential operator that annihilates an associated wave function. Gukov and Su{\l}kowski proposed that quantum curves can be calculated using the topological recursion formalism~\cite{guk-sul}. Inspired by arguments from physics, they assert that the correlation differentials arising from the topological recursion can be integrated to yield the free energies. For example, in the case of monotone Hurwitz numbers, one obtains the following.
\begin{equation} \label{eq:freeenergies}
F_{g,n}(x_1, \ldots, x_n) = \sum_{\mu_1, \ldots, \mu_n = 1}^\infty \H_{g,n}(\mu_1, \ldots, \mu_n) \prod_{i=1}^n x_i^{\mu_i}
\end{equation}
These can then be assembled to produce a wave function
\begin{equation} \label{eq:wavefunction}
Z(x, \h) = \exp \bigg[ \sum_{g=0}^\infty \sum_{n=1}^\infty \frac{\h^{2g-2+n}}{n!} \, F_{g,n}(x, \ldots, x) \bigg],
\end{equation}
which is annihilated by the quantum curve differential operator. Furthermore, the semi-classical limit of the quantum curve should reproduce the original spectral curve.

\begin{theorem}[Quantum curves for Hurwitz numbers~\cite{mul-sha-spi, do-dye-mat}]
The following are the quantum curves for simple Hurwitz numbers, orbifold Hurwitz numbers, and monotone Hurwitz numbers. In the simple and orbifold cases, we take $\xx = x$ and $\yy = -\h x \frac{\partial}{\partial x}$, while in the monotone case, we take $\xx = x$ and $\yy = -\h \frac{\partial}{\partial x}$.
\begin{align*}
& \text{simple} && \yy - \xx e^{\yy} \\
& \text{orbifold} && \yy - \exp(\tfrac{a-1}{2} \, \yy) \, \xx^a \, \exp(\tfrac{a+1}{2} \, \yy) \\
& \text{monotone} && \xx \yy^2 + \yy + 1
\end{align*}
In other words, the differential operators on the right annihilate the respective wave functions given by equation~\eqref{eq:wavefunction}.
\end{theorem}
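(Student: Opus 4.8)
The plan is to handle the three cases by a single template, using the defining feature of the wave function: because $\vec{Z}$ and its analogues are exponentials of connected free energies, they are generating functions for \emph{disconnected} covers, and once we pass to the disconnected count the ramification profile over $\infty$ becomes unconstrained.

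First I would make the disconnected reinterpretation precise. By the exponential formula, the coefficient of $x^N$ in the wave function is a weighted count of all tuples $(\sigma_0, \sigma_1, \ldots, \sigma_m)$ in $S_N$ with $\sigma_0$ of the prescribed cycle type — the identity for the simple case, $(a, a, \ldots, a)$ for the orbifold and monotone-orbifold cases — and with $\sigma_1, \ldots, \sigma_m$ transpositions, which are required to be monotone only in the monotone case. Since there is no longer any constraint on the product $\sigma_0 \sigma_1 \cdots \sigma_m$, the transposition sequence is chosen completely independently of $\sigma_0$ (cf.\ the remark preceding Lemma~\ref{lem:cycletype}), so the count factors as the number $\tfrac{N!}{a^{N/a}(N/a)!}$ of permutations of cycle type $(a, \ldots, a)$ times the number of admissible transposition sequences of length $m$. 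The $\h$-grading is then fixed by $m = (2g-2+n) + \tfrac{|\mmu|}{a}$, so that the total $\h$-exponent of a disconnected cover is $m - \tfrac{N}{a}$.

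Next I would evaluate the length-generating series of transposition sequences using Jucys--Murphy technology and the augmentation map $\epsilon$ on $\mathbb{C}[S_N]$. In the monotone case, length-$m$ monotone sequences are enumerated by $h_m(J_2, \ldots, J_N)$, so their generating series is the augmentation of $\prod_{k=2}^N (1 - \h J_k)^{-1}$, which equals $\prod_{j=1}^{N-1}(1-j\h)^{-1}$ because $\epsilon(J_k) = k-1$. In the simple and orbifold cases, where the transpositions are arbitrary and the free energies carry an extra $\tfrac{1}{m!}$ normalisation, the series is the augmentation of $\exp(\h T)$ with $T = J_2 + \cdots + J_N$ the class sum of all transpositions, giving $\exp(\h \binom{N}{2})$. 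Collecting these with the $\h$-grading and the count of $\sigma_0$ produces the closed form of each wave function; the monotone-Hurwitz wave function is exactly the $a=1$ specialisation of Theorem~\ref{thm:qcurve}.

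Finally I would verify the operator equations directly on these closed forms. With $\yy = -\h x\tfrac{\partial}{\partial x}$ the Euler operator, $\yy$ scales $x^N$ by $-\h N$ and $e^{t\yy}$ is the dilation $x \mapsto e^{-t\h}x$; with $\yy = -\h\tfrac{\partial}{\partial x}$ it lowers the degree by one. Writing the wave function as $\sum_N z_N x^N$ and applying the stated operator turns each equation into a first-order recurrence relating $z_N$ to $z_{N-1}$ (monotone) or $z_{N-a}$ (orbifold), and checking that the closed form satisfies it is a short ratio computation, exactly analogous to the verification of Theorem~\ref{thm:qcurve}. I expect the real difficulty to be bookkeeping rather than conceptual: matching the $\tfrac{1}{n!}$, automorphism and factorial normalisations so that the disconnected series agrees with the closed form, and in particular correctly implementing the $\mathbb{C}^*$-coordinate convention for the simple and orbifold cases (footnote~\ref{foot:logarithmic}), together with the orientation and sign conventions that decide between $\exp(\pm\h\binom{N}{2})$. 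Resolving the operator ordering in $\exp(\tfrac{a-1}{2}\yy)\,\xx^a\,\exp(\tfrac{a+1}{2}\yy)$ via the commutator $[\xx, \yy]$, and confirming that the symmetric split into $\tfrac{a-1}{2}$ and $\tfrac{a+1}{2}$ is what the normalisation demands, is the step most likely to require care.
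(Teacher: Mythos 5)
This theorem is not proved in the paper at all --- it is quoted as background with a citation to \cite{mul-sha-spi, do-dye-mat} --- so the only internal benchmark is the paper's proof of its own quantum curve, Theorem~\ref{thm:qcurve}. Measured against that, your strategy is the same one, extended to all three rows: use the exponential formula to pass to disconnected factorisations, observe that once the constraint on $\sigma_0\sigma_1\cdots\sigma_m$ is dropped the transposition sequence decouples from $\sigma_0$, write the wave function in closed form as a series in $x$, and verify the operator identity coefficient by coefficient. For the monotone row your argument is literally the $a=1$ specialisation of Lemma~\ref{lem:series} and the proof of Theorem~\ref{thm:qcurve}; your Jucys--Murphy evaluation of $h_m(J_2,\ldots,J_N)$ under the augmentation map is the same identity the paper packages as the Stirling-number generating function $\sum_N \stirling{N}{K}\h^{N-K} = \prod_{j=1}^K(1-j\h)^{-1}$. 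For the simple and orbifold rows your closed form $\sum_k \frac{x^{ak}}{a^k k!\,\h^k}\exp\big(\pm\h\binom{ak}{2}\big)$ and one-line ratio check is more elementary than the derivations in the cited literature, and the operator ordering you worry about is in fact forced rather than chosen: since $\yy$ scales $x^N$ by $-\h N$, the identity
\begin{equation*}
\binom{N}{2}-\binom{N-a}{2} \;=\; a(N-a)+\binom{a}{2} \;=\; \tfrac{a-1}{2}\,N+\tfrac{a+1}{2}\,(N-a)
\end{equation*}
is exactly what makes $\exp(\tfrac{a-1}{2}\yy)\,\xx^a\,\exp(\tfrac{a+1}{2}\yy)$ reproduce the ratio $z_N/z_{N-a}$.

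The one place where ``bookkeeping'' could silently become an error is the sign you flag: with the literal conventions of equation~\eqref{eq:wavefunction} and the weight $\h^{2g-2+n}$, the naive closed form $z_N=\frac{1}{N!\,\h^N}e^{\h\binom{N}{2}}$ does \emph{not} satisfy $(\yy-\xx e^{\yy})Z=0$; the recurrence demands $z_N/z_{N-1}=-\frac{1}{N\h}e^{-\h(N-1)}$, so the effective expansion parameter in the simple and orbifold rows is $-\h$ (equivalently, the sign is inherited from $\omega_{0,1}=-y\,\dd x$ and the conventions of \cite{mul-sha-spi}). This is a finite, checkable discrepancy rather than a conceptual gap, but your proof is not complete until that sign is pinned down explicitly, since it is precisely the difference between the stated operator and $\yy-\xx e^{-\yy}$.
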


Note that the difference in the definition of the $\yy$ operator for the various cases is due to the use of logarithmic coordinates mentioned in footnote~\ref{foot:logarithmic}. The semi-classical limits of these quantum curves are obtained by setting $\h = 0$ and replacing the operators $\xx$ and $\yy$ with commuting variables $x$ and $y$, respectively. One can check that this does indeed recover the spectral curves of Theorem~\ref{thm:toprechur}.

\section{Cut-and-join analysis} \label{sec:cutjoin}

This section is devoted to the derivation of the cut-and-join recursion for monotone orbifold Hurwitz numbers. One way to count tuples of transpositions subject to certain constraints is to count monodromy graphs with certain weights. The notion of monodromy graph appeared in the work of Cavalieri, Johnson, and Markwig~\cite{cav-joh-mar10,cav-joh-mar11}, where it was used to prove the chamber structure of the double Hurwitz numbers and to derive the corresponding wall-crossing formula. It was subsequently used by Guay-Paquet, Markwig, and Rau~\cite{gua-mar-rau,mar-rau} as a convenient tool for the calculation of real double Hurwitz numbers. We will define the notion of a monotone monodromy graph and apply it to derive a cut-and-join recursion for monotone orbifold Hurwitz numbers.

Note that monodromy graphs appear naturally in the framework of tropical geometry, where they represent tropical covers of the projective line --- that is, limits of families of maps between Riemann surfaces under certain degenerations of the complex structure. Thus, it would be natural to ask whether monotone monodromy graphs admit an interpretation as limits of families of maps between Riemann surfaces endowed with some additional geometric data. Although the cut-and-join recursion for monotone orbifold Hurwitz numbers may be derived without introducing the notion of monotone monodromy graphs, we use this approach to expose a potential connection to algebraic and tropical geometry.

As mentioned in Section~\ref{sec:hurwitz}, the number of monotone Hurwitz factorisations depends only on the conjugacy class of the permutation $\sigma_0$. For the remainder of this section, we fix a positive integer $a$ and the permutation
\[
\sigma_0 = (1, 2, \ldots, a) ~ (a+1, a+2, \ldots, 2a) ~ \cdots ~ (ak-a+1, ak-a+2, \ldots, ak).
\]
We consider monotone Hurwitz factorisations $(\sigma_0, \sigma_1, \ldots, \sigma_m)$ and to each, we assign a graph endowed with some additional information.

\begin{definition} \label{def:graph}
Let $g$ be a non-negative integer and let $\mmu = (\mu_1, \ldots, \mu_n)$ be a tuple of positive integers with $|\mmu| = ak$. A graph $\Gamma$ is a \emph{monotone monodromy graph of type $(g, \mmu)$} if the following conditions hold.

{\em Graph conditions.}
\begin{itemize}
\item The graph $\Gamma$ is a connected directed graph with first Betti number equal to $g$.
\item The graph $\Gamma$ has $k+n$ leaves and all remaining vertices (called \emph{inner vertices}) have degree 3.
\item The inner vertices are totally ordered compatibly with the partial ordering induced by the directions of the edges. (This order corresponds to that of the transpositions.)
\end{itemize}

{\em Weight conditions.}
\begin{itemize}
\item Each edge $e$ of $\Gamma$ is equipped with a positive integer weight $w(e)$. The weights of edges adjacent to leaves directed inwards (called \emph{in-ends}) are equal to $a$. The weights of edges adjacent to leaves directed outwards (called \emph{out-ends}) are the parts of $\mmu$.
\item At each inner vertex, the sum of the weights of incoming edges equals the sum of the weights of outgoing edges. This is known as the \emph{balancing condition}.
\end{itemize}

{\em Colouring conditions.}
\begin{itemize}
\item Each edge of $\Gamma$ has one of three colours --- normal, dashed or bold --- such that the colouring at every inner vertex is of one of the six types listed in Figure~\ref{fig:vertices}.

\begin{figure}[ht!]
\begin{center}
\begin{multicols}{3}
\begin{tikzpicture}
\draw [ultra thick] (0,0) -- (1,0) -- (2,-0.5);
\draw [thin] (1,0) -- (2,0.5);
\end{tikzpicture}

~

\begin{tikzpicture}
\draw [ultra thick] (0,0) -- (1,0);
\draw [thin] (2,0.5) -- (1,0);
\draw [very thick, dashed] (1,0) -- (2,-0.5);
\end{tikzpicture}

\begin{tikzpicture}
\draw [ultra thick] (0,0.5) -- (1,0);
\draw [thin] (0,-0.5) -- (1,0);
\draw [very thick, dashed] (1,0) -- (2,0);
\end{tikzpicture}

~

\begin{tikzpicture}
\draw [ultra thick] (0,0.5) -- (1,0) -- (2,0);
\draw [thin] (0,-0.5) -- (1,0);
\end{tikzpicture}

\begin{tikzpicture}
\draw [ultra thick] (0,0.5) -- (1,0) -- (2,0);
\draw [very thick, dashed] (0,-0.5) -- (1,0);
\end{tikzpicture}

~

\begin{tikzpicture}
\draw [ultra thick] (0,0.5) -- (1,0);
\draw [very thick, dashed] (0,-0.5) -- (1,0);
\draw [very thick, dashed] (1,0) -- (2,0);
\end{tikzpicture}
\end{multicols}
\end{center}
\caption{These are the possible types of inner vertices of a monotone monodromy graph. The edges are assumed to be oriented from left to right and the weights and counters of the edges are not specified.} \label{fig:vertices}
\end{figure}
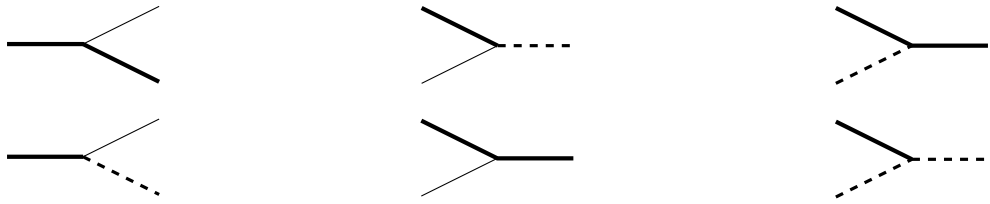

\item There are no normal in-ends and there is a unique bold out-end.

\item Chains of bold edges --- that is, inclusion maximal subgraphs containing only bold edges --- begin at in-ends of $\Gamma$. For any chain $C$ of bold edges, we can associate the numbers $f_C$ and $l_C$. These are respectively the numbers of the first and last inner vertices that belong to $C$, according to the ordering of the vertices of $\Gamma$. As a consequence of the monotonicity condition, the intervals $[f_C,l_C]$ are not allowed to intersect for different chains of bold edges. 
\end{itemize}

{\em Counter conditions.}
\begin{itemize}
\item Each dashed or bold edge is marked with a \emph{counter}, which is an integer from $1$ to $a$. The counter for each in-end is set to $1$. The counter for an ingoing bold edge at an inner vertex is less than or equal to the counter for the outgoing bold or dashed edge. Furthermore, if the weight of a bold or dashed edge is $w$, then its counter is greater then $a-w$. This condition arises from the fact that the cycle corresponding to the bold or dashed edge with counter $\ell$ should contain at least $a - \ell + 1$ elements.
\end{itemize}
\end{definition}

An example of a monotone monodromy graph is shown in Figure~\ref{fig:graph}.

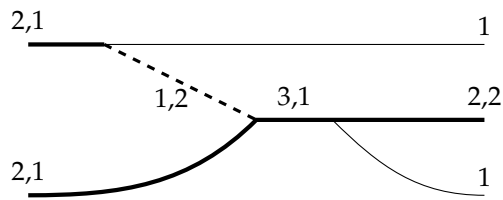
\begin{figure}[ht!]
\begin{center}
\begin{tikzpicture}
\draw [ultra thick] (-2,1) node[above]{2,1} -- (-1,1);
\draw [ultra thick] (-2,-1) node[above]{2,1} to [out=0,in=225] (1,0)-- (1.5,0) node[above]{3,1} -- (2,0) -- (4,0) node [above]{2,2};
\draw [thin] (-1,1) -- (2,1) -- (4,1) node[above]{1};
\draw [very thick, dashed] (-1,1) -- (-0.12,0.56) node[below]{1,2} -- (1,0);
\draw [thin] (2,0) to [out=-45,in=180] (4,-1) node[above]{1};
\end{tikzpicture}
\end{center}
\caption{This is a monotone monodromy graph of genus 0 that contributes to the calculation of $\H^{[2]}(1,1,2)$. Each normal edge is marked with its weight. Each dashed or bold edge is marked with a pair comprising its weight and counter. The edges are directed from left to right while the vertices are ordered from left to right.} \label{fig:graph}
\end{figure}

We now present the construction of the monotone monodromy graph associated to a monotone Hurwitz factorisation ($\sigma_0, \sigma_1, \ldots, \sigma_m)$. The graph is constructed inductively, by cutting or joining its out-ends by inner vertices. Draw $k$ in-ends of weight $a$ and establish a one-to-one correspondence between these edges and the cycles of $\sigma_0$. Let $\tau_i$ = $\sigma_0 \sigma_1 \cdots \sigma_i$ for $i = 0, 1, \ldots, m$. At each step, a one-to-one correspondence between the cycles of $\tau_i$ and the out-ends of the constructed graph is fixed. The transposition $\sigma_i$ for $i=1, 2, \ldots, m$ either cuts or joins two cycles of $\tau_{i-1}$. We encode this by the type of the $i$th inner vertex adjacent to the edge corresponding to the cycle under consideration. In the case of a cut, the vertex has one ingoing edge and two outgoing edges, while in the case of a join, it has two ingoing edges and one outgoing edge. The constructed trivalent vertex is a sink for the edges that were cut or joined, and a source for the ends emerging from it. So we have endowed all the edges with a direction. Moreover, the order of constructing inner vertices is precisely the order of the inner vertices in the definition of a monotone monodromy graph. The weights of edges of the constructed graph correspond to the lengths of the cycles of $\tau_i$. 

The colouring of the edges is established as follows. In the initial state, all the edges are coloured dashed and the counters of all in-ends are set to 1. Now every transposition $\sigma_i = (r_i ~ s_i)$ with $r_i < s_i$ either cuts a cycle into two, or joins two cycles.
\begin{itemize}
\item If the transposition $\sigma_i$ cuts a cycle into two, colour the ingoing edge to the vertex bold. Colour the outgoing edge corresponding to the newly formed cycle containing $r_i$ normal and colour the other outgoing edge dashed.
\item If the transposition $\sigma_i$ joins two cycles, change the colour of the ingoing edge corresponding to the cycle containing $s_i$ to bold and colour the outgoing edge dashed.
\end{itemize}
Finally, change the colour of the out-end corresponding to the cycle of $\tau_m$ containing $s_m$ to bold.

For the $i$th inner vertex, we set the counter of the outgoing bold or dashed edge as follows. The transposition $\sigma_i$ is of the form $(r_i, q_i a + \ell_i)$, where $q_i \in \{0, 1, \ldots, k-1\}$, $\ell_i \in \{1, 2, \ldots, a\}$, and $r_i < q_i a + \ell_i$. The corresponding counter is set to $\ell_i$.

It is clear that any monotone Hurwitz factorisation $(\sigma_0, \sigma_1, \ldots, \sigma_m)$ yields a monotone monodromy graph. On the other hand, any monotone monodromy graph corresponds to a number of monotone Hurwitz factorisations, and this number is given by Lemma~\ref{lem:multiplicity} below.

Define the multiplicity $m_v$ of an inner vertex $v$ of a monotone monodromy graph as follows. Set $m_v = 1$ if $v$ corresponds to a cut and set $m_v = w(e)$ otherwise, where $e$ is the ingoing normal or dashed edge at $v$. Also, we define the number $n_\Gamma$ for a monotone monodromy graph $\Gamma$ by the following rule. For any chain of bold edges $C_i$ in $\Gamma$, define the number $n_i$ to be the number of in-ends adjacent to the vertices in $C_i$. Suppose that $\Gamma$ contains $N_\Gamma$ chains of bold edges. The chains of bold edges are naturally ordered, as they induce a decomposition of the set of inner vertices into equivalence classes and this decomposition respects the ordering of the inner vertices. List the chains of bold edges in reverse order, with $C_1$ as the largest. Then define $n_\Gamma$ to be equal to
\[
n_\Gamma = \frac{k!}{C_{N_\Gamma} \, (C_{N_\Gamma} + C_{N_{\Gamma - 1}}) \, \cdots \, (C_{N_\Gamma} + C_{N_{\Gamma - 1}} + \cdots + C_1)}.
\]

\begin{lemma} \label{lem:multiplicity}
The number $m(\Gamma)$ of monotone Hurwitz factorisations $(\sigma_0,\sigma_1,\ldots,\sigma_m)$ that yield the monotone monodromy graph $\Gamma$ satisfies
\[
m(\Gamma) = n_\Gamma \prod_{v} m_v,
\]
where the product is over the inner vertices of $\Gamma$.
\end{lemma}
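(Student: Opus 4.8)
The plan is to compute $m(\Gamma)$ as the number of points in the fibre of the map sending a monotone Hurwitz factorisation $(\sigma_0, \sigma_1, \ldots, \sigma_m)$ to its monodromy graph, by reconstructing all such factorisations from $\Gamma$. The reconstruction data splits into two independent pieces: a bijection between the $k$ in-ends of $\Gamma$ and the $k$ cycles of the fixed permutation $\sigma_0$, and, processing the inner vertices in their prescribed total order, a choice of transposition $\sigma_i = (r_i~s_i)$ at each vertex. The guiding observation is that, once the in-end bijection is fixed, the larger index $s_i$ of every transposition is determined: the bold chain through the $i$th vertex identifies the $\sigma_0$-cycle $\{qa+1, \ldots, qa+a\}$ supplying the $s$-values along that chain, and the counter $\ell_i$ then pins down $s_i = qa + \ell_i$. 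I would first make this precise by induction on the vertex order, tracking the invariant that the bold edge entering each vertex carries the cycle containing the current maximal active index, so that $s_i$ is the recorded element and exceeds every element of the opposite incoming cycle at a join.

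Next I would establish the local factors $\prod_v m_v$, each vertex contributing independently. At a cut vertex the incoming cycle is split by $(r_i~s_i)$ into prescribed lengths $\alpha$ and $\beta$; a short computation of how right-multiplication by a transposition splits a cycle shows that, given $s_i$ and the length $\beta$ of the dashed output, the element $r_i$ is then forced (it is the cyclic predecessor of $s_i$ at distance $\beta$), so the transposition is unique and $m_v = 1$. At a join vertex, $s_i$ is again determined, but $r_i$ may be taken to be any of the $w(e)$ elements of the incoming normal-or-dashed cycle $e$; each choice produces a genuinely different factorisation, all with the same weights, colours and counters and hence the same graph, and all admissible because $r_i < s_i$ holds automatically by the maximality invariant. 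This gives $m_v = w(e)$.

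For the global factor $n_\Gamma$ I would count the admissible in-end bijections. Monotonicity of $s_1 \le \cdots \le s_m$, together with the constraints $r_i < s_i$, forces two conditions on the assignment of $\sigma_0$-cycles to in-ends: within each bold chain $C_i$ the cycle feeding its $s$-values (the one at its starting bold in-end) must be the largest among the $n_i$ cycles assigned to in-ends of $C_i$, and, reading the chains in their time order $C_{N_\Gamma}, \ldots, C_1$, these distinguished cycles must strictly increase. I would recognise the number of such assignments as the number of linear extensions of the rooted forest whose spine is the chain of distinguished in-ends and from each of whose spine nodes hang the remaining $n_i - 1$ in-ends of $C_i$ as minimal elements. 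The principal order ideal below the distinguished in-end of the $j$th earliest chain has size $a_j = n_{N_\Gamma} + \cdots + n_{N_\Gamma - j + 1}$, while every other in-end has ideal of size $1$, so the hook-length formula for forests yields $k! / (a_1 a_2 \cdots a_{N_\Gamma}) = n_\Gamma$.

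Finally I would assemble the pieces: an admissible bijection, of which there are $n_\Gamma$, determines every $s_i$, and independently each vertex offers $m_v$ choices for $r_i$, with distinct choices giving distinct factorisations and every factorisation producing $\Gamma$ arising exactly once, whence $m(\Gamma) = n_\Gamma \prod_v m_v$. I expect the main obstacle to be the maximality invariant underlying the whole argument --- proving that along each bold chain the $s$-values really do come from a single $\sigma_0$-cycle and that $s_i$ dominates the opposite cycle at every join --- since this is what simultaneously makes $s_i$ well defined, justifies the count $m_v = w(e)$ with all $r_i < s_i$, and supplies the two order conditions feeding the hook-length computation. Verifying that the per-vertex choices are genuinely independent, in the sense that altering some $r_i$ never changes the weights, colours, or counters downstream, is the other point requiring care.
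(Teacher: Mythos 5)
Your proposal is correct and follows essentially the same route as the paper: the same decomposition into a global count of admissible assignments of $\sigma_0$-cycles to in-ends (subject to the same two order conditions on bold chains) times independent local vertex factors, with the same justification that a cut forces the transposition ($m_v=1$) while a join leaves $w(e)$ choices for the smaller element. The only cosmetic difference is that you evaluate the in-end count via the hook-length formula for forests, whereas the paper multiplies out the corresponding ratios of factorials directly; both give $n_\Gamma$.
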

\begin{proof}

First, note that for any monotone Hurwitz factorisation $(\sigma_0, \sigma_1, \ldots, \sigma_m)$ with $\sigma_m = (r ~ s)$, where $r < s$ and $s \geq ak+1$, all the numbers in $\{c, c+1, \ldots, ak\}$ are in the same cycle of $\sigma_0\sigma_1\cdots\sigma_m$. Furthermore, for any $r \in \{c, c+1, \ldots,ak-1\}$, we have $\sigma_0\sigma_1\cdots\sigma_m(r) = r+1$.

So in the case that an edge of a monotone monodromy graph is cut, the corresponding transposition is uniquely defined by the weights of the outgoing edges from the corresponding vertex and the counter on the outgoing bold or dashed edge.

In that case that two edges of a monotone monodromy graph are joined, the largest element of the corresponding transposition is uniquely defined by the counter assigned to the outgoing edge. However, we have a number of possibilities for the first element of the transposition and this number is precisely the weight of the ingoing edge that is not bold.

Finally, we have to choose how to assign the cycles of $\sigma_0$ to the in-ends of the monotone monodromy graph. Obviously, the number of cycles of $\sigma_0$ should obey the following rules.
\begin{itemize}
\item For any chain $C$ of bold edges, the number of the cycle attached to the bold in-end is greater then the numbers of the cycles attached to the normal in-ends adjacent to vertices $f_C,\ldots,l_C$.
\item If the chain $C_i$ is larger than the chain $C_j$ according to the order on chains of edges, then the number of the cycle attached to the bold in-end of $C_i$ is larger than the number of the cycle attached to the bold in-end of $C_j$.

Summing these two conditions, we obtain that the number of ways to choose the correspondence between the cycles of $\sigma_0$ and the in-ends of $\Gamma$ is
\[
\frac{(k-1)!}{(k-n_{C_1})!} \times \frac{(k-1-n_{C_1})!}{(k-n_{C_1} - n_{C_2})!} \times \cdots \times \frac {(k - 1 - n_{C_1} - \cdots - n_{C_{N_\Gamma-1}})!}{(k - n_{C_1} - \cdots - n_{C_{N_\Gamma}})!},
\]
which is equal to $n_\Gamma$. \qedhere
\end{itemize}
\end{proof}

Note that the out-ends of a monotone monodromy graph $\Gamma$ admit a natural order, compatible with the ordering of the inner vertices to which they are adjacent. If two out-ends are adjacent to the same inner vertex, we declare the dashed or bold out-end to be greater then the normal out-end. Moreover, we are going to refine the enumeration of monotone Hurwitz numbers by keeping track of the counter of the unique bold out-end.

\begin{definition}
As above, let $\mmu = (\mu_1, \ldots, \mu_n)$ be a tuple of positive integers and let $S = \{1, 2, \ldots, n\}$. We denote by $N_g^\ell(\mu_1\vbar \mmu_{S\setminus \{1\}})$ the weighted number of monotone monodromy graphs of type $(g,\mmu)$ such that their unique bold out-end has weight $\mu_1$ and counter $\ell$. We also define
\[
N_g(\mmu) = \sum_{\ell = 1}^a \sum_{k=1}^n N_g^\ell (\mu_k\vbar \mmu_{S\setminus \{k\}}).
\] 
\end{definition}

Another way to describe the same refined enumeration without reference to the notion of monotone monodromy graph is the following. Let $\bar{\mmu} = (\mu_1\vbar \mmu_{S\setminus \{1\}})$ be a tuple of positive integers with a distinguished element $\mu_1$. Denote by $\sigma_0$ the special permutation
\[
\sigma_0 = (1, 2, \ldots, a) ~ (a+1, a+2, \ldots, 2a) ~ \cdots ~ (ak-a+1, ak-a+2, \ldots, ak).
\]
We define a \emph{refined Hurwitz factorisation of type $(g,\bar{\mmu},\ell)$} to be a monotone $a$-orbifold Hurwitz factorisation of type $(g,\mmu)$ --- see Definition~\ref{def:fact} --- such that
\begin{itemize}
\item the permutation $\sigma_0$ is fixed to be the special one introduced above;
\item the cycle of $\sigma_0 \sigma_1 \cdots \sigma_m$ labeled by 1 has length $\mu_1$; and
\item the transposition $\sigma_m$ is of the form $(r_m, ak - a + \ell)$ and all the numbers $ak-a + \ell, \ldots, ak$ are contained in the first cycle of $\sigma_0 \sigma_1 \cdots \sigma_m$.
\end{itemize}
Now define $N_g^\ell (\mu_1 \vbar \mmu_{S\setminus \{1\}})$ to be the number of refined Hurwitz factorisations of type $(g,\bar{\mmu},\ell)$. By the relation between monotone Hurwitz factorisations and monotone monodromy graphs, these numbers coincide with those defined via the notion of monotone monodromy graphs.

Now analyse the possibilities for the action of the transposition $\sigma_m$, where $m = 2g-2+n+\frac{|\mmu|}{a}$. On the level of monotone monodromy graphs, this corresponds to analysing the possibilities that arise on removal of the last inner vertex. One obtains the following cases in the calculation of the number $N^\ell_g(\mu_1,\mmu_{S\setminus \{1\}})$.

\begin{itemize}
\item {\em The permutation $\sigma_m$ is a cut.} \\ On the level of monotone monodromy graphs, this corresponds to the case when the last inner vertex is a cut. So the corresponding term in the cut-and-join recursion is
\[
\Theta(\mu_1 + \ell - a -1) \sum_{p=1}^\ell \sum_{i=2}^n N^p_g(\mu_1 + \mu_i \vbar \mmu_{S\setminus\{1,i\}}).
\]
Here, $\Theta$ denotes the Heaviside step function, which accounts for the allowed values of the counter.

\item {\em The permutation $\sigma_m$ is a redundant join.} \\ On the level of monotone monodromy graphs, this corresponds to the last inner vertex joining two edges that already belong to the same connected component of the graph. So the corresponding term in the cut-and-join recursion is
\[
\sum_{\alpha + \beta = \mu_1} \sum_{p=1}^\ell \beta \, N^p_{g-1}(\alpha \vbar \mmu_{S\setminus\{1\}}, \beta).
\]
Here, the factor $\beta$ appears due to the multiplicity of the vertex under consideration.

\item {\em The permutation $\sigma_m$ is an essential join.} \\ On the level of monotone monodromy graphs, this corresponds to the last inner vertex joining two connected components. The two components have degrees $k_1a$ and $k_2a$, where $k_1 + k_2 = k$. So the corresponding term in the cut-and-join recursion is
\[
\sum_{\alpha+ \beta = \mu_1}\mathop{\sum_{g_1 + g_2 = g}}_{I \sqcup J = S \setminus \{1\}}\sum_{p=1}^\ell \binom{k-1}{\frac{1}{a} (|\mmu_I| + \beta)} \beta \, N_{g_1}(\mmu_I, \beta) \, N_{g_2}^p(\alpha \vbar \mmu_J).
\]
To obtain this expression, we note that for any two monotone monodromy graphs, any decomposition of the set of cycles of $\sigma_0$ into two parts yields a unique order on the union of the sets of their chains of bold edges.
\end{itemize}

The cut-and-join analysis above leads directly to the following result.

\begin{proposition} \label{prop:ncutjoin}
The numbers $N^\ell_g(\mu_1 \vbar \mmu_{S\setminus \{1\}})$ satisfy the following recursion.
\begin{align*}
N^\ell_g(\mu_1 \vbar &\mmu_{S\setminus \{1\}}) =\, \Theta(\mu_1 + \ell - a -1) \sum_{p=1}^\ell \sum_{i=2}^n N^p_g(\mu_1 + \mu_i \vbar \mmu_{S\setminus\{1,i\}})\\
&+\sum_{\alpha + \beta = \mu_1} \sum_{p=1}^\ell \Bigg[ \beta \, N^p_{g-1}(\alpha \vbar \mmu_{S\setminus\{1\}}, \{\beta\}) + \mathop{\sum_{g_1 + g_2 = g}}_{I \sqcup J = S \setminus \{1\}} \binom{k-1}{\frac{1}{a}(|\mmu_I| + \beta)} \beta \, N_{g_1}(\mmu_I, \beta) \, N_{g_2}^p(\alpha \vbar \mmu_J) \Bigg]
\end{align*}
\end{proposition}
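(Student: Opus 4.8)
The plan is to prove the recursion by a \textbf{peeling} argument on monotone monodromy graphs. Because the inner vertices of any graph counted by $N^\ell_g(\mu_1 \vbar \mmu_{S \setminus \{1\}})$ are totally ordered, and the unique bold out-end records the action of the final transposition $\sigma_m$, the last inner vertex $v_m$ is always incident to the bold out-end. Removing $v_m$ therefore produces either a single smaller monotone monodromy graph or an ordered pair of such graphs, and the weighted count $N^\ell_g$ splits as a sum over the three possible local types of $v_m$: cut, redundant join, and essential join. This is exactly the case analysis laid out in the three displayed bullet points immediately preceding the statement, so the task is to verify that removing $v_m$ sets up a weight-preserving bijection in each case and to assemble the three contributions into a single formula.

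First I would treat the cut and the redundant join. In the cut, $v_m$ has one bold ingoing edge and two outgoing edges, one of which is the bold out-end of weight $\mu_1$ and counter $\ell$, the other a normal out-end of weight $\mu_i$ for some $i \in \{2, \ldots, n\}$. Reversing the cut merges these into a single out-end of weight $\mu_1 + \mu_i$ carrying the counter $p$ of the former ingoing edge, and the counter condition at $v_m$ gives $1 \le p \le \ell$. Since a cut vertex has multiplicity $1$, summing over $i$ and $p$ yields $\sum_{p=1}^\ell \sum_{i=2}^n N^p_g(\mu_1 + \mu_i \vbar \mmu_{S \setminus \{1,i\}})$, while the prefactor $\Theta(\mu_1 + \ell - a - 1)$ enforces the counter validity requirement $\ell > a - \mu_1$ on the bold out-end of the original graph. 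The redundant join is analogous: $v_m$ joins two out-ends of the same connected component, so reversing it splits the bold out-end of weight $\mu_1$ into two ends of weights $\alpha$ and $\beta$ with $\alpha + \beta = \mu_1$ and drops the first Betti number by one, producing genus $g-1$. The vertex multiplicity $m_{v_m} = \beta$, the weight of the non-bold ingoing edge, accounts for the factor $\beta$, and summing over $\alpha + \beta = \mu_1$ and $1 \le p \le \ell$ gives $\sum_{\alpha+\beta=\mu_1} \sum_{p=1}^\ell \beta \, N^p_{g-1}(\alpha \vbar \mmu_{S \setminus \{1\}}, \beta)$.

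The essential join is where the real work lies, and I expect it to be the main obstacle. Removing $v_m$ now disconnects the graph into two monotone monodromy graphs $\Gamma_1$ and $\Gamma_2$ of genera $g_1 + g_2 = g$ and leaf labels $I \sqcup J = S \setminus \{1\}$, with the bold out-end of weight $\mu_1$ splitting into a normal out-end of weight $\beta$ landing in $\Gamma_1$ and the new bold out-end of weight $\alpha$ (with $\alpha + \beta = \mu_1$) in $\Gamma_2$; the factor $\beta$ is again the vertex multiplicity. The delicate point is the binomial coefficient $\binom{k-1}{\frac{1}{a}(|\mmu_I| + \beta)}$: the two components have degrees $ak_1$ and $ak_2$ with $k_1 = \frac{1}{a}(|\mmu_I| + \beta)$, and to reconstruct $\Gamma$ one must interleave the ordered chains of bold edges of $\Gamma_1$ and $\Gamma_2$ and distribute the cycles of $\sigma_0$ accordingly. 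I would first prove that every admissible decomposition of the cycle set of $\sigma_0$ into the two prescribed blocks determines a unique compatible total order on the union of the two chain structures --- the remark flagged just before the statement --- and then verify that the induced change in the factor $n_\Gamma$ of Lemma~\ref{lem:multiplicity}, measured against $n_{\Gamma_1}$ and $n_{\Gamma_2}$, is precisely $\binom{k-1}{k_1}$. Comparing $n_\Gamma$ with $n_{\Gamma_1} \, n_{\Gamma_2}$ through the explicit formula for $n_\Gamma$ is the crux, with the appearance of $k-1$ rather than $k$ reflecting that the bold chain extended through $v_m$ removes one degree of freedom in the interleaving. Assembling the three cases then yields the stated recursion.
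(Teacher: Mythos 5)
Your proposal follows essentially the same route as the paper: peel off the last inner vertex (equivalently, the last transposition $\sigma_m$), split into the cut / redundant join / essential join cases, and account respectively for the counter constraint via the Heaviside factor, the vertex multiplicity $\beta$, and the binomial coefficient coming from interleaving the chains of bold edges of the two components --- which is precisely the three-bullet cut-and-join analysis the paper gives immediately before the proposition. You correctly isolate the factor $\binom{k-1}{\frac{1}{a}(|\mmu_I|+\beta)}$ in the essential join as the delicate point, to be checked by comparing $n_\Gamma$ against $n_{\Gamma_1}\,n_{\Gamma_2}$; this is exactly the claim the paper asserts (without further detail) via the unique ordering on the union of the chains of bold edges.
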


As mentioned in Section~\ref{sec:hurwitz}, the number of monotone Hurwitz factorisations depends only on the conjugacy class of the permutation $\sigma_0$. So in order to obtain the corresponding Hurwitz number, we need to multiply the number of monotone Hurwitz factorisations with fixed $\sigma_0$ by the number of permutations in the conjugacy class of $\sigma_0$ and divide by the order of the symmetric group. So it is natural to write
\begin{equation} \label{eq:hfromn}
\H_{g,n}^{[a],\ell}(\mu_1 \vbar \mmu_{S\setminus \{1\}}) = \frac 1{a^k k!} N^\ell_g(\mu_1 \vbar \mmu_{S\setminus \{1\}}) \qquad \qquad \text{and} \qquad \qquad \H_{g,n}^{[a]}(\mmu_S) =\frac 1{a^k k!} N_g(\mmu_{S}).
\end{equation}
It follows that the monotone orbifold Hurwitz number can be expressed in terms of the refined enumeration via equation~\eqref{eq:hurwitzsum1}, which states that
\begin{equation} \label{eq:hurwitzsum}
\H_{g,n}^{[a]}(\mu_1, \ldots, \mu_n) = \sum_{i=1}^n \sum_{\ell=1}^a \H_{g,n}^{[a],\ell}(\mu_i \vbar \mmu_{S \setminus \{i\}}).
\end{equation}

Theorem~\ref{thm:cutjoin} now follows immediately from Proposition~\ref{prop:ncutjoin}, equations~\eqref{eq:hfromn} and~\eqref{eq:hurwitzsum}, as well as the trivial base case calculation $\H_{0,1}^{[a],\ell}(a \vbar) = \frac{1}{a} \delta_{\ell, 1}$.

As a simple application of the cut-and-join recursion, we calculate the monotone orbifold Hurwitz numbers in the case $(g,n) = (0,1)$.

\begin{proposition} \label{prop:01problem}
For every positive integer $k$, we have $\H_{0,1}^{[a]}(ak) = \frac {1}{ak^2} \binom{ak + k -2}{k-1}$.
\end{proposition}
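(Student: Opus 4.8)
The plan is to specialise the cut-and-join recursion of Theorem~\ref{thm:cutjoin} to $(g,n)=(0,1)$ and then solve the resulting recursion with generating functions. First I would observe that for $(g,n)=(0,1)$ almost every term of~\eqref{eq:cutjoin} vanishes: with $n=1$ the cut term has an empty sum $\sum_{i=2}^{n}$, and with $g=0$ the redundant-join term involves $\H_{-1,2}^{[a],p}$ and is therefore zero. Only the essential-join term survives, and there $g_1=g_2=0$ forces $I=J=\emptyset$. Writing $\beta=aj$ and $\alpha=a(k-j)$ (the only values giving nonzero Hurwitz numbers) and using $|\mmu|=ak$, the recursion collapses to
\[
\H_{0,1}^{[a],\ell}(ak\vbar) = \frac{a}{k}\sum_{j=1}^{k-1} j(k-j)\,\H_{0,1}^{[a]}(aj)\sum_{p=1}^{\ell}\H_{0,1}^{[a],p}\big(a(k-j)\vbar\big),
\]
with base case $\H_{0,1}^{[a],\ell}(a\vbar)=\tfrac1a\delta_{\ell,1}$. (Transitivity is automatic here, since the product is a single $ak$-cycle.)

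Next I would package these numbers into generating functions in a variable $y$ tracking $\mu_1/a$. Setting $\tilde h^{\ell}(y)=\sum_{k\ge1}\H_{0,1}^{[a],\ell}(ak\vbar)\,y^{k}$, introducing the partial sums $u_\ell=\sum_{p=1}^{\ell}\tilde h^{p}$ (so $u_0=0$ and, by~\eqref{eq:hurwitzsum1}, $u_a=\tilde H:=\sum_{k\ge1}\H_{0,1}^{[a]}(ak)\,y^{k}$), the convolution above becomes a product while the factors $\tfrac1k$ and $j(k-j)$ turn into applications of $y\tfrac{\dd}{\dd y}$. Concretely, the recursion becomes equivalent to the differential relation
\[
(\tilde h^{\ell})'(y) = \tfrac1a\,\delta_{\ell,1} + a\,y\,\tilde H'(y)\,u_\ell'(y),
\]
where the $\delta_{\ell,1}$ accounts for the $k=1$ base case.

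The key manoeuvre is then to solve this as a triangular system in $\ell$. Writing $D(y)=1-a\,y\,\tilde H'(y)$ and using $(\tilde h^{\ell})'=u_\ell'-u_{\ell-1}'$, the relation rearranges to $D\,u_\ell' = u_{\ell-1}' + \tfrac1a\delta_{\ell,1}$, which telescopes to $u_\ell' = \tfrac1a D^{-\ell}$. Taking $\ell=a$ yields the closed functional equation
\[
\tilde H'(y) = \frac{1}{a\,\big(1-a\,y\,\tilde H'(y)\big)^{a}}.
\]
Setting $v=a\,y\,\tilde H'(y)$ converts this to $y=v(1-v)^{a}$ together with $\tilde H'=\tfrac1a(1-v)^{-a}$; this is exactly the $x$-equation of the spectral curve~\eqref{eq:scurve} under $(y,v)\leftrightarrow(x,z)$, a reassuring consistency check with Conjecture~\ref{con:toprec} in the $(0,1)$ case.

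Finally I would extract coefficients by Lagrange inversion applied to $v=y\,(1-v)^{-a}$. For $k\ge2$ this gives
\[
k\,\H_{0,1}^{[a]}(ak) = [y^{k-1}]\,\tilde H'(y) = \frac{1}{k-1}\,[v^{k-2}](1-v)^{-(ak+1)} = \frac{1}{k-1}\binom{ak+k-2}{k-2},
\]
and the elementary identity $\binom{ak+k-2}{k-2}=\tfrac{k-1}{ak}\binom{ak+k-2}{k-1}$ turns this into the claimed $\H_{0,1}^{[a]}(ak)=\tfrac{1}{ak^{2}}\binom{ak+k-2}{k-1}$; the case $k=1$ is the base value $\tfrac1a$. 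I expect the main obstacle to be the presence of the refinement index $\ell$: the $(0,1)$ recursion does not close on the numbers $\H_{0,1}^{[a]}(ak)$ alone but genuinely mixes the refined numbers $\H_{0,1}^{[a],\ell}$, so the crucial idea is to introduce the partial sums $u_\ell$ and recognise the triangular structure that lets $\tilde H'$ be solved for. Once the functional equation is in hand, the remaining Lagrange inversion is routine.
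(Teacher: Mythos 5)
Your proof is correct and follows essentially the same route as the paper: both specialise the cut-and-join recursion to $(g,n)=(0,1)$, package the refined numbers $\H_{0,1}^{[a],\ell}(ak\vbar)$ into generating functions, solve the resulting triangular system in the counter index $\ell$ to arrive at the functional equation $t=\psi(1-\psi)^a$ (your $y=v(1-v)^a$ with $v=a\,y\,\tilde{H}'$), and finish with Lagrange inversion. The only cosmetic difference is that you apply Lagrange inversion to $\tilde{H}'$ rather than directly to $\psi=\sum_k ak\,\H_{0,1}^{[a]}(ak)\,t^k$, which costs one extra (trivial) binomial identity.
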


\begin{proof}
The cut-and-join recursion in the case $(g,n) = (0,1)$ reads
\[
ak \, \H^{[a],\ell}_{0,1}(ak \vbar) = \sum_{m+n = k} a^2mn \, \H^{[a]}_{0,1}(am) \sum_{p=1}^\ell \H^{[a],p}_{0,1}(an \vbar ).
\]
Now introduce the generating functions
\[
\phi_\ell = \sum_{k=1}^\infty ak \, \H^{[a],\ell}_{0,1}(ak \vbar ) \, t^k \qquad\qquad \text{and} \qquad\qquad \psi = \sum_{p=1}^a \phi_p = \sum_{k=1}^\infty ak \, \H^{[a]}_{0,1}(ak) \, t^k.
\]
Evidently, these functions obey the system of equations
\[
\phi_\ell = \psi \sum_{p=1}^\ell \phi_p + \delta_{1,\ell} t \qquad\qquad \text{ for $\ell = 1, 2, \ldots, a$}.
\]
A simple induction argument shows that $\phi_\ell = \psi^2 (1 - \psi)^{a-\ell} + \delta_{1,\ell}t$ for all $\ell = 1, 2, \ldots, a$. In particular, the equation for $\ell = 1$ yields $t = \psi (1 - \psi)^a$. Now invoke the Lagrange inversion theorem to deduce that
\[
\H^{[a]}_{0,1}(ak) = \frac 1{ak^2}\binom{ak + k -2}{k-1}. \qedhere
\]
\end{proof}

\section{The quantum curve} \label{sec:qcurve}

As discussed in Section~\ref{sec:hurwitz}, we define the correlation functions for the monotone orbifold Hurwitz numbers thus.
\[
\vec{F}_{g,n}^{[a]}(x_1, \ldots, x_n) = \sum_{\mu_1, \ldots \mu_n = 1}^\infty \H_{g,n}^{[a]}(\mu_1, \ldots, \mu_n) \, x_1^{\mu_1} \cdots x_n^{\mu_n}
\]
From these, we define the wave function in the following way.
\begin{align*}
\vec{Z}^{[a]}(x, \h) &= \exp \left[ \sum_{g=0}^\infty \sum_{n=1}^\infty \frac{\h^{2g-2+n}}{n!} \, \vec{F}^{[a]}_{g,n}(x, \ldots, x) \right] \\
&= \exp \left[ \sum_{g=0}^\infty \sum_{n=1}^\infty \frac{\h^{2g-2+n}}{n!} \sum_{\mu_1, \ldots, \mu_n = 1}^\infty \H_{g,n}^{[a]}(\mu_1, \ldots, \mu_n) \, x^{|\mmu|} \right]
\end{align*}
Lemma~\ref{lem:series} below allows us to interpret this as an element of $\mathbb{Q}(\!(\h)\!)[[x]]$ --- in other words, a formal power series in $x$ whose coefficients are Laurent series in $\h$.

A combinatorial interpretation for the coefficients of the wave function can be obtained by making the following observations.
\begin{itemize}
\item Setting the arguments of $\vec{F}^{[a]}_{g,n}$ equal to $x$ enumerates branched covers only by their degree, rather than their ramification profile over $\infty$. Furthermore, the factor $\frac{1}{n!}$ removes the labelling over $\infty$.
\item The factor $\h^{2g-2+n}$ collects terms only by their Euler characteristic.
\item The exponential passes from an enumeration of connected objects to an enumeration of possibly disconnected objects.
\end{itemize}
In short, the wave function is a generating function for possibly disconnected branched covers, assembled according to the degree and the Euler characteristic $2g-2+n$. Applying this strategy leads to the following expression for the wave function.

\begin{lemma} \label{lem:series}
The wave function for monotone orbifold Hurwitz numbers is given by
\[
\vec{Z}^{[a]}(x, \h) = 1 + \sum_{k=1}^\infty \frac{x^{ak}}{k! \, a^k \, \h^k} \prod_{j=1}^{ak-1} \frac{1}{1-j\h}.
\]
\end{lemma}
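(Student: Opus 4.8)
The plan is to give the coefficients of $\vec{Z}^{[a]}$ a direct combinatorial meaning and then reduce everything to counting monotone sequences of transpositions. Following the three observations preceding the statement, the first step is to show that for each $k \ge 1$,
\[
[x^{ak}] \, \vec{Z}^{[a]}(x,\h) = \frac{1}{(ak)!} \sum_{m \ge 0} D_{m,k} \, \h^{m-k},
\]
where $D_{m,k}$ is the number of tuples $(\sigma_0, \sigma_1, \ldots, \sigma_m)$ in $S_{ak}$ with $\sigma_0$ of cycle type $(a, a, \ldots, a)$ and $\sigma_1, \ldots, \sigma_m$ a monotone sequence of transpositions, now with \emph{no} transitivity requirement and \emph{no} constraint on the cycle type of the product. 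To justify this I would first rewrite $\frac{1}{n!} \vec{F}_{g,n}^{[a]}(x,\ldots,x)$ as the count of connected (transitive) factorisations whose product has $n$ cycles, the division by $n!$ exactly forgetting the labelling of those cycles; since $2g-2+n = m - k$, the prefactor $\h^{2g-2+n}$ becomes $\h^{m-k}$, so $\log \vec{Z}^{[a]}$ is the generating series for connected such factorisations weighted by $\frac{x^{ak}}{(ak)!}\h^{m-k}$. Exponentiating then passes to possibly disconnected factorisations.

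The delicate point, and the one I expect to be the main obstacle, is that the exponential formula applies with no extra multiplicity despite the \emph{global} monotonicity constraint. The key observation is that the top element $s_i$ of each transposition $\sigma_i = (r_i~s_i)$ lies in the orbit of $\langle \sigma_0, \ldots, \sigma_m\rangle$ on which it acts, so all transpositions sharing a given top element belong to a single connected component. Restricting a global monotone factorisation to each orbit yields a monotone factorisation there, and conversely the components can be reassembled in exactly one way, by sorting all transpositions by top element, ties being resolved within the unique component that owns that value. Thus disconnected factorisations correspond bijectively to unordered collections of connected ones on the orbits, the weight $\h^{m-k}$ is additive over components, and the exponential formula holds verbatim.

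With the interpretation in hand, the remaining steps are routine. Since nothing links $\sigma_0$ to the transposition sequence, $D_{m,k}$ factors as $\frac{(ak)!}{a^k\,k!} \, M_m(ak)$, where $\frac{(ak)!}{a^k\,k!}$ is the number of permutations of cycle type $(a,\ldots,a)$ in $S_{ak}$ and $M_m(ak)$ is the number of monotone sequences of $m$ transpositions in $S_{ak}$. A monotone sequence is specified by a non-decreasing choice of top elements $2 \le s_1 \le \cdots \le s_m \le ak$ together with a choice of $r_i \in \{1, \ldots, s_i - 1\}$ for each $i$, so I would compute the generating function
\[
\sum_{m \ge 0} M_m(ak) \, t^m = \prod_{s=2}^{ak} \frac{1}{1-(s-1)t} = \prod_{j=1}^{ak-1} \frac{1}{1-jt}.
\]
Substituting $t = \h$, multiplying by $\frac{(ak)!}{a^k\,k!}\,\h^{-k}$, and dividing by $(ak)!$ collapses the factorials and yields $[x^{ak}]\,\vec{Z}^{[a]} = \frac{1}{k!\,a^k\,\h^k}\prod_{j=1}^{ak-1}\frac{1}{1-j\h}$; the $k=0$ term contributes the leading $1$, which gives the claimed formula.
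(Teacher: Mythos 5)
Your proof is correct and follows essentially the same route as the paper's: interpret the coefficients of $\vec{Z}^{[a]}$ as counts of possibly disconnected monotone factorisations via the exponential formula, then evaluate that count as the number of choices of $\sigma_0$ times the number of monotone sequences of transpositions. The only difference is that you prove directly two facts the paper delegates to citations --- the compatibility of the exponential formula with the global monotonicity constraint (your interleaving-by-top-element argument), and the product formula $\prod_{j=1}^{ak-1}(1-j\h)^{-1}$ for the generating function of monotone sequences, which the paper obtains by citing the Stirling number count $\stirling{ak+m-1}{ak-1}$ and its standard generating function.
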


\begin{proof}
The first step is to use the exponential formula to interpret the wave function as a generating function for disconnected monotone orbifold Hurwitz numbers. The number $\H_{g,n}^{[a]\bullet}(\mu_1, \ldots, \mu_n)$ is defined similarly to its connected counterpart, though without the transitivity condition.
\[
\vec{Z}^{[a]}(x, \h) = 1 + \sum_{g=-\infty}^\infty \sum_{n=1}^\infty \frac{\h^{2g-2+n}}{n!} \sum_{\mu_1, \ldots, \mu_n = 1}^\infty \H_{g,n}^{[a]\bullet}(\mu_1, \ldots, \mu_n) \, x^{|\mmu|}
\]
Note that we are using the arithmetic genus, which may be negative for a disconnected branched cover. The use of the exponential formula to pass from a connected to a disconnected enumeration is common in the literature --- for example, see \cite{do-dye-mat} for an explicit proof in the case of monotone Hurwitz numbers.

Now observe that the coefficient of the monomial $x^d \h^r$ in the expansion of the wave function is precisely $\frac{1}{d!}$ multiplied by the number of tuples $(\sigma_0, \sigma_1, \ldots, \sigma_m)$ of permutations in $S_d$ such that
\begin{itemize}
\item $m = r + \frac{d}{a}$;
\item $\sigma_0$ has cycle type $(a, a, \ldots, a)$; and
\item $\sigma_1, \ldots, \sigma_m$ is a monotone sequence of transpositions.
\end{itemize}
It was shown in \cite{do-dye-mat} that the number of monotone sequences of $m$ transpositions in $S_d$ is equal to the Stirling number of the second kind $\stirling{d+m-1}{d-1}$ for $d \geq 1$ and $m \geq 0$. Therefore, one may express the wave function as
\[
\vec{Z}^{[a]}(x, \h) = 1 + \sum_{k=1}^\infty \sum_{m=0}^\infty \stirling{ak+m-1}{ak-1} \, \frac{x^{ak } \, \h^m}{k! \, a^k \, \h^k}.
\]

To obtain the desired expression for the wave function, invoke the following well-known generating function for Stirling numbers of the second kind.
\[
\sum_{N=0}^\infty \stirling{N}{K} \, \h^{N-K} = \prod_{j=1}^K \frac{1}{1-j\h} \qedhere
\]
\end{proof}

As an immediate corollary of this lemma, we obtain the fact that $\vec{Z}^{[a]}(x, \h) \in \mathbb{Q}(\!(\h)\!)[[x]]$.

Lemma~\ref{lem:series} constitutes the first part of Theorem~\ref{thm:qcurve}. We now use it to prove the remainder of Theorem~\ref{thm:qcurve}, which states that
\[
\left[ \xx^{a-1} + \prod_{j=0}^{a-1} (1 + \xx \yy + j\h) \, \yy \right] \vec{Z}^{[a]}(x, \h) = 0.
\]

\begin{proof}[Proof of Theorem~\ref{thm:qcurve}]
First, we consider the action of $\xx^{a-1}$ on the wave function.
\begin{align} \label{eq:qcurve1}
\xx^{a-1} \vec{Z}^{[a]}(x, \h) &= x^{a-1} + \sum_{k=1}^\infty \frac{x^{ak+a-1}}{k! \, a^k \, \h^k} \prod_{j=1}^{ak-1} \frac{1}{1-j\h} \nonumber \\
&= \sum_{k=1}^\infty \frac{x^{ak-1}}{(k-1)! \, a^{k-1} \, \h^{k-1}} \prod_{j=1}^{ak-a-1} \frac{1}{1-j\h} 
\end{align}

Clearly, the operators $(1+ \xx \yy + j\h)$ commute for different $j$, and satisfy
\[
(1+\xx \yy + j\h) \, x^{ak-1} = (1 - (ak-1-j)\h) \, x^{ak-1}.
\]

Therefore, the action of $\displaystyle\prod_{j=0}^{a-1} (1 + \xx \yy + j\h) \, \yy$ on the wave function is as follows.
\begin{align} \label{eq:qcurve2}
\prod_{j=0}^{a-1} (1 + \xx \yy + j \h) \, \yy \, \vec{Z}^{[a]}(x, \h) &= - \h\prod_{j=0}^{a-1} (1 + \xx \yy + j \h) \sum_{k=1}^\infty \frac{ak \, x^{ak-1}}{k! \, a^k \, \h^k} \prod_{j=1}^{ak-1} \frac{1}{1-j\h} \nonumber \\
&= - \prod_{j=0}^{a-1} (1 + \xx \yy + j \h) \sum_{k=1}^\infty \frac{x^{ak-1}}{(k-1)! \, a^{k-1} \, \h^{k-1}} \prod_{j=1}^{ak-1} \frac{1}{1-j\h} \nonumber \\
&= - \prod_{j=0}^{a-1} (1 - (ak-1-j)\h) \sum_{k=1}^\infty \frac{x^{ak-1}}{(k-1)! \, a^{k-1} \, \h^{k-1}} \prod_{j=1}^{ak-1} \frac{1}{1-j\h} \nonumber \\
&= - \sum_{k=1}^\infty \frac{x^{ak-1}}{(k-1)! \, a^{k-1} \, \h^{k-1}} \prod_{j=1}^{ak-a-1} \frac{1}{1-j\h}
\end{align}

Adding equations~\eqref{eq:qcurve1} and \eqref{eq:qcurve2} yields the desired result.
\end{proof}

\begin{remark}
One can attempt to interpret the wave function either as an expansion in $\h$ or as an expansion in $x$. These naturally lead to two distinct approaches to determining the quantum curve. The difficulty of the former approach is that the expansion is not well-defined if one includes the unstable terms $(g,n) = (0,1)$ and $(0,2)$. This issue can be overcome by interpreting the wave function as the product of unstable and stable parts, and incorporating the unstable part into the quantum curve differential operator. In this paper, we follow the latter approach, which gives a cleaner combinatorial argument.
\end{remark}

\section{A conjecture on topological recursion} \label{sec:toprec}

Conjecture~\ref{con:toprec} of Section~\ref{sec:intro} states that topological recursion applied to the spectral curve
\[
x(z) = z(1-z^a) \qquad \qquad \text{and} \qquad \qquad y(z) = \frac{z^{a-1}}{z^a-1}.
\]
produces correlation differentials that satisfy
\[
\omega_{g,n} = \sum_{\mu_1, \ldots, \mu_n = 1}^\infty \H_{g,n}^{[a]}(\mu_1, \ldots, \mu_n) \prod_{i=1}^n \mu_i x_i^{\mu_i-1} \, \dd x_i, \qquad \text{for } (g,n) \neq (0,2).
\]
The formulation of the topological recursion that we refer to was defined in Section~\ref{sec:hurwitz}. In this section, we provide evidence to support our conjecture as well as some consequences.

Strong evidence for Conjecture~\ref{con:toprec} comes from Theorem~\ref{thm:qcurve}, which states that the quantum curve equation for monotone orbifold Hurwitz number is given by
\[
\left[ \xx^{a-1} + \prod_{j=0}^{a-1} (1 + \xx \yy + j\h) \, \yy \right] \vec{Z}^{[a]}(x, \h) = 0.
\]
In general, it is expected that the semi-classical limit of a quantum curve for a given enumerative problem should recover the spectral curve. Furthermore, topological recursion applied to this spectral curve should produce (derivatives of) the free energies for the enumerative problem. This viewpoint is discussed in the work of Gukov and Su{\l}kowski in the context of quantisations of A-polynomials for knots~\cite{guk-sul}. The semi-classical limit of the quantum curve for monotone orbifold Hurwitz numbers is $x^{a-1} + y(1+xy)^a = 0$, and it is easy to check that this has the rational parametrisation given by equation~\eqref{eq:scurve}.

The spectral curve should come from the $(g,n) = (0,1)$ information of the enumerative problem, as discussed in~\cite{dum-mul-saf-sor}. Therefore, further evidence for Conjecture~\ref{con:toprec} is provided by the following result.

\begin{proposition}
If we write $y = -\frac{\partial}{\partial x} F_{0,1}^{[a]}(x)$, then
\[
x^{a-1} + y(1+xy)^a = 0,
\]
thereby recovering the spectral curve of equation~\eqref{eq:scurve}.
\end{proposition}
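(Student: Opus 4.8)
The plan is to reduce everything to the single-variable functional equation already established in the proof of Proposition~\ref{prop:01problem}. Since the $(0,1)$ case forces $|\mmu| = \mu_1 = ak$ for some positive integer $k$, the coefficient $\H_{0,1}^{[a]}(\mu)$ vanishes unless $\mu$ is a multiple of $a$, so that $F_{0,1}^{[a]}$ is genuinely a power series in $x^a$:
\[
F_{0,1}^{[a]}(x) = \sum_{k=1}^\infty \H_{0,1}^{[a]}(ak) \, x^{ak}.
\]

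First I would relate $F_{0,1}^{[a]}$ to the generating function $\psi(t) = \sum_{k=1}^\infty ak \, \H_{0,1}^{[a]}(ak) \, t^k$ introduced in the proof of Proposition~\ref{prop:01problem}. Applying the Euler operator $x \frac{\partial}{\partial x}$ multiplies the coefficient of $x^{ak}$ by $ak$, so that under the substitution $t = x^a$ one obtains the formal power series identity
\[
x \frac{\partial}{\partial x} F_{0,1}^{[a]}(x) = \psi(x^a).
\]
Since $y = -\frac{\partial}{\partial x} F_{0,1}^{[a]}(x)$ by hypothesis, multiplying by $x$ gives the clean relation $xy = -\psi(x^a)$, equivalently $\psi(x^a) = -xy$.

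Next I would substitute into the functional equation $t = \psi(1-\psi)^a$, which was derived in the proof of Proposition~\ref{prop:01problem} (it is the relation inverted by Lagrange inversion to compute $\H_{0,1}^{[a]}(ak)$). With $t = x^a$ and $\psi(x^a) = -xy$, and using $1 - (-xy) = 1 + xy$, this becomes
\[
x^a = (-xy)(1+xy)^a = -xy\,(1+xy)^a.
\]
Cancelling the common factor of $x$ — legitimate since both sides lie in $x\,\mathbb{Q}[[x]]$ and $x$ is a non-zero-divisor in $\mathbb{Q}[[x]]$ — yields $x^{a-1} = -y(1+xy)^a$, which is precisely the claimed relation $x^{a-1} + y(1+xy)^a = 0$.

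There is essentially no hard step here: once the identification $\psi(x^a) = -xy$ is made, the result is an immediate substitution into the already-proven functional equation. The only point requiring care is the bookkeeping of the substitution $t = x^a$, namely verifying the coefficient-by-coefficient equality $x \frac{\partial}{\partial x} F_{0,1}^{[a]}(x) = \psi(x^a)$ as an identity of formal power series and confirming that the vanishing of $\H_{0,1}^{[a]}(\mu)$ for $\mu$ not divisible by $a$ indeed makes $F_{0,1}^{[a]}$ a series in $x^a$.
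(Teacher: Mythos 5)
Your argument is correct and follows essentially the same route as the paper: both invoke the functional equation $t = \psi(1-\psi)^a$ from the proof of Proposition~\ref{prop:01problem}, identify $\psi(x^a) = x\frac{\partial}{\partial x}\vec{F}_{0,1}^{[a]}(x) = -xy$, and substitute. Your additional remarks on why $F_{0,1}^{[a]}$ is a series in $x^a$ and on cancelling the factor of $x$ are sound bookkeeping that the paper leaves implicit.
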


\begin{proof}
In the proof of Proposition~\ref{prop:01problem}, we obtained the equation $t = \psi(1-\psi)^a$, where
\[
\psi(t) = \sum_{k=1}^\infty ak \, \H_{0,1}^{[a]}(ak) t^k \qquad \Rightarrow \qquad \psi(x^a) = x \frac{\partial}{\partial x} \vec{F}_{0,1}^{[a]}(x).
\]
It follows that
\[
x^a = \Big[x\frac{\partial}{\partial x} \vec{F}^{[a]}_{0,1}(x)\Big] \Big[1 - x\frac{\partial}{\partial x} \vec{F}^{[a]}_{0,1}(x)\Big]^{a},
\]
which gives us the desired result.
\end{proof}

Of course, one can also obtain numerical evidence to support Conjecture~\ref{con:toprec}. We have implemented the cut-and-join recursion on the computer to calculate monotone orbifold Hurwitz numbers. We have also computed the correlation differentials $\omega_{g,n}$ for the spectral curve of equation~\eqref{eq:scurve} for small values of $g$, $n$ and $a$. All evidence generated in this way has been consistent with Conjecture~\ref{con:toprec}.

A consequence of Conjecture~\ref{con:toprec} would be the following structure result for monotone orbifold Hurwitz numbers, analogous to the polynomiality observed for simple Hurwitz numbers. In the case $a = 1$, it is equivalent to the known polynomiality for monotone Hurwitz numbers stated in Theorem~\ref{thm:mpoly}.

\begin{conjecture} \label{con:mopoly}
For positive integers $a$ and $\mu$, define
\[
C^{[a]}(\mu) = (a+1)^{\{ \mu / a \}} \binom{\mu + \lfloor \mu / a \rfloor}{\lfloor \mu / a \rfloor}.
\]
The monotone orbifold Hurwitz numbers satisfy
\[
\H_{g,n}^{[a]}(\mu_1, \ldots, \mu_n) = \prod_{i=1}^n C^{[a]}(\mu_i) \, Q_{g,n}^{[a]}(\mu_1, \ldots, \mu_n),
\]
where $Q_{g,n}^{[a]}$ is a symmetric quasi-polynomial modulo $a$ of degree $3g-3+n$.
\end{conjecture}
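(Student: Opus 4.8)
The plan is to deduce the structure result from Conjecture~\ref{con:toprec}, since the prefactor $C^{[a]}(\mu)$, the quasi-polynomiality modulo $a$, and the degree bound $3g-3+n$ are all transparent consequences of the analytic shape of the correlation differentials produced by topological recursion. (As in Theorem~\ref{thm:mpoly}, one should read the statement for $(g,n)\neq(0,1),(0,2)$; indeed from Proposition~\ref{prop:01problem} one checks that $\H_{0,1}^{[a]}(ak)/C^{[a]}(ak) = \big[(a+1)k\,((a+1)k-1)\big]^{-1}$, which is not a quasi-polynomial.) First I would record the standard structure theorem for topological recursion on a rational spectral curve: for $2g-2+n>0$, the differential $\omega_{g,n}$ is a rational multidifferential whose only poles lie at the zeros $\alpha$ of $\dd x$, and it is a finite $\mathbb{C}$-linear combination of products $\bigotimes_i \dd\xi_{\alpha_{j_i}}^{(k_i)}(z_i)$ of auxiliary differentials with $\sum_i k_i \le 3g-3+n$. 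For the curve~\eqref{eq:scurve} one has $\dd x = (1-(a+1)z^a)\,\dd z$, so the ramification points are the $a$ roots $\alpha_j = \zeta^j\rho$ of $z^a = (a+1)^{-1}$, where $\rho = (a+1)^{-1/a}$ and $\zeta = e^{2\pi i/a}$. Moreover the curve~\eqref{eq:scurve} carries a $\mathbb{Z}_a$ symmetry $z\mapsto\zeta z$ under which $x\mapsto\zeta x$; this cyclically permutes the $\alpha_j$ and multiplies the coefficient of $x^\mu$ by $\zeta^\mu$, which is exactly the mechanism that will force $Q_{g,n}^{[a]}$ to depend on the $\mu_i$ only through their residues modulo $a$.

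The key step is coefficient extraction. Writing $x_i = x(z_i)$ and using that $x$ is a local biholomorphism near $z=0$ (where $x(0)=0$, $x'(0)=1$), one has
\[
\H_{g,n}^{[a]}(\mmu) = \prod_{i=1}^n \frac{1}{\mu_i}\mathop{\mathrm{Res}}_{x_i=0}\, x_i^{-\mu_i}\,\omega_{g,n}(z_1,\ldots,z_n),
\]
understood as iterated residues. Because $\omega_{g,n}$ is a finite linear combination of the basis differentials, this reduces to the single-variable building blocks $\frac{1}{\mu}\mathop{\mathrm{Res}}_{x=0} x^{-\mu}\,\dd\xi_{\alpha}^{(k)}(z)$, which are simply the Taylor coefficients of these rational differentials expressed in the local coordinate $x$ at $x=0$. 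The heart of the argument is to show that, after summing the contributions of the $a$ ramification points, such a block equals $C^{[a]}(\mu)$ times a polynomial in $\mu$ of degree $k$. Here the fractional prefactor $(a+1)^{\{\mu/a\}} = \rho^{-(\mu \bmod a)}$ arises precisely because expanding the coordinate at $\alpha_j = \zeta^j\rho$ in powers of $x$ produces the power $\rho^{-(\mu\bmod a)}$, while the binomial $\binom{\mu+\lfloor\mu/a\rfloor}{\lfloor\mu/a\rfloor}$ is exactly the Fuss--Catalan coefficient generated by Lagrange inversion of $x=z(1-z^a)$; the $\mathbb{Z}_a$ symmetry collapses the residual data into a function of $\mu$ that is polynomial on each residue class modulo $a$. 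Combining this with the degree bound $\sum_i k_i \le 3g-3+n$ gives the claimed factorisation with $\deg Q_{g,n}^{[a]}\le 3g-3+n$, and a separate leading-order (Euler-characteristic) computation would pin the degree down to exactly $3g-3+n$.

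I expect the building-block computation to be the main obstacle: isolating the universal factor $C^{[a]}(\mu)$ cleanly from the $k$-dependent polynomial part requires a careful local analysis at all $a$ ramification points simultaneously, and verifying that the contributions assemble into a single quasi-polynomial modulo $a$ — rather than something supported on finer congruence classes — is the delicate point. A more self-contained but technically heavier alternative would bypass Conjecture~\ref{con:toprec} entirely and argue by induction on $2g-2+n$ directly from the cut-and-join recursion of Theorem~\ref{thm:cutjoin}, as was done for the monotone case underlying Theorem~\ref{thm:mpoly}: one posits the factorised ansatz and shows it is preserved by the recursion. There the difficulty migrates to proving that the convolution sums $\sum_{\alpha+\beta=\mu_1}\beta\,(\cdots)$ against the binomial prefactors $C^{[a]}$ telescope into quasi-polynomials, and to controlling the refined index $\ell$ together with the Heaviside constraint $\Theta(\mu_1+\ell-a-1)$, which is the combinatorial source of the modulo-$a$ behaviour. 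I would attempt the topological-recursion route first, as it makes the origin of every feature of the conjecture manifest.
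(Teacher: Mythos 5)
The statement you are trying to prove is Conjecture~\ref{con:mopoly}: the paper does not prove it. It only remarks that the structure result \emph{would be} a consequence of Conjecture~\ref{con:toprec}, which is itself open in the paper. Your proposal follows exactly that intended route, and several of your observations are correct and worthwhile --- in particular the identification $(a+1)^{\{\mu/a\}} = \rho^{-(\mu \bmod a)}$ with $\rho = (a+1)^{-1/a}$ the modulus of the ramification points of $x(z) = z(1-z^a)$, the role of the $\mathbb{Z}_a$ symmetry $z \mapsto \zeta z$ in producing quasi-polynomiality modulo $a$, and the check that $\H_{0,1}^{[a]}(ak)/C^{[a]}(ak) = \bigl[(a+1)k\,((a+1)k-1)\bigr]^{-1}$ is not quasi-polynomial, so the statement must be read for $(g,n) \neq (0,1), (0,2)$ as in Theorem~\ref{thm:mpoly}.

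However, what you have written is a plan, not a proof, and it has two genuine gaps. First, the entire argument is conditional on Conjecture~\ref{con:toprec}, which is unproven; at best you would establish the implication ``Conjecture~\ref{con:toprec} $\Rightarrow$ Conjecture~\ref{con:mopoly}'', not the conjecture itself. Second, even the conditional argument is incomplete at precisely the step that carries all the content: you assert, but do not prove, that the building blocks $\frac{1}{\mu}\operatorname{Res}_{x=0} x^{-\mu}\,\dd\xi_{\alpha}^{(k)}(z)$, summed over the $a$ ramification points, equal $C^{[a]}(\mu)$ times a polynomial of degree $k$ in $\mu$ on each residue class modulo $a$. This requires an explicit Lagrange-inversion computation of the local expansion of the basis differentials at $z=0$ in the coordinate $x$, together with a verification that the universal prefactor separates cleanly from the $k$-dependent polynomial and that nothing finer than congruence modulo $a$ survives --- you name this as ``the main obstacle'' yourself. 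Until that computation is carried out (and the degree is pinned to exactly $3g-3+n$ rather than merely bounded above), the proposal does not constitute a proof of anything beyond what the paper already states informally.
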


\begin{small}
\bibliographystyle{habbrv}
\bibliography{monotone-orbifold-hurwitz-v2}

\textsc{School of Mathematical Sciences, Monash University, VIC 3800, Australia} \\
\emph{Email:} \href{mailto:norm.do@monash.edu}{norm.do@monash.edu}

\textsc{St. Petersburg Department of the Steklov Mathematical Institute, Fontanka 27, St. Petersburg 191023, Russia} \\
\emph{Email:} \href{mailto:max.karev@gmail.com}{max.karev@gmail.com}

\end{small}

\end{document}